\newcommand{\n}{\noindent}
\newcommand{\ovl}{\overline}
\newcommand{\intl}{\int\limits}
\newcommand{\bb}[1]{\mathbb{#1}}
\newcommand{\cl}[1]{\mathcal{#1}}
\newcommand{\vp}{\varepsilon}
\numberwithin{equation}{section}
\theoremstyle{plain}
\newtheorem{thm}{Theorem}[section]
\newtheorem{lem}{Lemma}[section]
\theoremstyle{remark}
\begin{document}

\title{Initial Blow-up of Solutions of Semilinear Parabolic Inequalities}

\author{Steven D. Taliaferro\\
Department of Mathematics\\
Texas A\&M University\\
College Station, TX\ 77843-3368\\
{\tt stalia@math.tamu.edu}}

\date{}
\maketitle

\thispagestyle{empty}

\begin{abstract}
We study classical nonnegative solutions $u(x,t)$ of the semilinear
parabolic inequalities
\[
0\le u_t-\Delta u\le u^p \qquad\text{in}\qquad \Omega\times
(0,1)
\] 
where $p$ is a positive constant and $\Omega$ is a smooth
bounded domain in ${\bb R}^n$, $n\ge 1$.

We show that a necessary and sufficient condition on $p$ for
such solutions $u$ to satisfy an a priori bound on compact subsets $K$
of $\Omega$ as $t\to 0^+$ is $p\le 1+2/n$ and in this case
the a priori bound on $u$ is
\[
\max_{x\in K} u(x,t)  = O(t^{-n/2}) \qquad\text{as}\qquad t\to 0^+.
\]

If in addition, $u$ satisfies Dirichlet boundary conditions $u=0$
on $\partial \Omega\times (0,1)$ and $p< 1+2/(n+1)$, then we
obtain a uniform a priori bound for $u$ on the entire set $\Omega$ as
$t\to 0^+$.
\medskip

\n {\it Keywords:} Initial blow-up, semilinear parabolic inequalities.
\medskip

\n 2010 Mathematics Subject Classification Codes: 35B09, 35B33, 35B40,
35B44, 35B45, 35K10, 35K58, 35R45.
\end{abstract}

\section{Introduction}\label{sec1}

\indent 

It is not hard to prove that if $u$ is a nonnegative solution of the
heat equation
\begin{equation}\label{neq1.1}
u_t-\Delta u=0 \qquad \hbox{in} \quad \Omega\times (0,1),   
\end{equation} 
where $\Omega$ is an open subset of ${\bb R}^n$, $n\ge 1$, then for
each compact subset $K$ of $\Omega$, we have 
\begin{equation}\label{neq1.2}
\max_{x\in K}\,u(x,t)=O(t^{-n/2}) \qquad \hbox{as}\quad t\to 0^+.
\end{equation} 
The exponent $-n/2$ in \eqref{neq1.2} is optimal because the Gaussian
\begin{equation}\label{neq1.3}
\Phi(x,t) = \left\{\begin{array}{ll}
\dfrac1{(4\pi t)^{n/2}} e^{-\frac{|x|^2}{4t}},&t>0\\
0,&t\le 0\end{array}\right.
\end{equation}
is a nonnegative solution of the heat equation in ${\bb R}^n\times
{\bb R}-\{(0,0)\}$ and
\begin{equation}\label{neq1.4}
\Phi(0,t)=(4\pi t)^{-n/2} \qquad \hbox{for}\quad t>0.
\end{equation}

It is also not hard to prove that if $u$ is a nonnegative solution of the
heat equation with Dirichlet boundary conditions
\begin{equation}\label{neq1.5}
\begin{split}
u_t-\Delta u&=0 \qquad \hbox{in} \quad \Omega\times
           (0,1)\\
           u&=0 \qquad \hbox{on} \quad \partial\Omega\times (0,1),
\end{split}
\end{equation}
where $\Omega$ is a $C^2$ bounded domain in ${\bb R}^n$, $n\ge 1$,
then there exists a positive constant $C$ such that
\begin{equation}\label{neq1.6}
u(x,t)\le C\frac{\frac{\rho(x)}{\sqrt{t}}\wedge 1}{\sqrt{t}^{n+1}}
\quad\text{for all }(x,t)\in\Omega\times(0,1/2)
\end{equation}
where $\rho(x)={\rm dist}(x,\partial\Omega)$.

Note that \eqref{neq1.6} is an a priori bound  for $u$ on the entire
set $\Omega$ rather than on compact subsets of $\Omega$.
As we discuss and state precisely in the paragraph after 
Theorem \ref{thm1.3}, the 
estimate \eqref{neq1.6} is optimal for $x$ near the boundary of
$\Omega$ and $t$ small.

In this paper, we generalize these results to nonnegative solutions
$u(x,t)$ of the inequalities
\begin{equation}\label{neq1.7}
0\le u_t-\Delta u\le f(u)\qquad\text{in}\qquad \Omega\times
(0,1)
\end{equation}
when the continuous function $f\colon [0,\infty)\to[0,\infty)$ is not
too large at infinity. Note that solutions of the heat equation
\eqref{neq1.1} satisfy \eqref{neq1.7}. Our first result deals with 
nonnegative solutions
$u$ of \eqref{neq1.7} when no boundary conditions are imposed on $u$.

\begin{thm}\label{thm1.1}
Suppose $u(x,t)$ is a $C^{2,1}$ nonnegative solution of
\begin{equation}\label{neq1.8}
 0 \le u_t - \Delta u \le (u +1)^{1+2/n}
\quad \text{in}\quad \Omega\times (0,1),
\end{equation}
where $\Omega$ is an open subset of ${\bb R}^n$, $n\ge 1$. Then, for
each compact subset $K$ of $\Omega$, $u$ satisfies \eqref{neq1.2}.
\end{thm}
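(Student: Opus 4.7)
My plan is to obtain a local heat-kernel representation inequality for $u(x_0,t_0)$, establish a preliminary uniform $L^1$ bound via an eigenfunction test, and then bootstrap in $L^q$-norms via parabolic semigroup smoothing to reach the sharp $L^\infty$ estimate at the Fujita critical exponent.

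First I fix $x_0\in K$ and $r>0$ small enough that $\overline{B_{2r}(x_0)}\subset\Omega$, set $B:=B_{2r}(x_0)$, choose a cutoff $\eta\in C_0^\infty(B)$ with $\eta\equiv 1$ on $B_r(x_0)$, and let $w:=\eta u$, extended by zero to ${\bb R}^n$. Applying Duhamel's formula for $w_t-\Delta w$ on ${\bb R}^n\times(s,t_0)$ against the free-space heat kernel $\Phi$, integrating by parts the $\nabla u$-terms produced by the cutoff, and using $0\le u_t-\Delta u\le(u+1)^{1+2/n}$, I obtain
\[
u(x_0,t_0)\le\int_B\Phi(x_0-y,t_0-s)\eta(y)u(y,s)\,dy+\int_s^{t_0}\!\!\int_B\Phi(x_0-y,t_0-\tau)\eta(y)(u+1)^{1+2/n}dy\,d\tau+R_r,
\]
where $R_r\le C_r\int_s^{t_0}\int_{\{r<|y-x_0|<2r\}}u\,dy\,d\tau$ is an error supported on an annulus on which $\Phi$ and $\nabla\Phi$ are uniformly bounded by a constant $C_r$.

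Next I would establish the preliminary bound $\sup_{t\in(0,1/2)}\int_B u(\cdot,t)\,dy\le C$. To that end, test the \emph{lower} inequality $u_t-\Delta u\ge 0$ against the first Dirichlet eigenfunction $\phi_1$ of $-\Delta$ on a slightly larger ball $B'\supset B$; integrate by parts and drop the favorable boundary term $-\int_{\partial B'}u\,\partial_\nu\phi_1\,d\sigma\ge 0$ (since $u\ge 0$ and $\partial_\nu\phi_1\le 0$) to get $\tfrac{d}{dt}\int u\phi_1\ge-\lambda_1\int u\phi_1$. Hence $e^{\lambda_1 t}\int u\phi_1$ is nondecreasing, and the classical bound at $t=1/2$ propagates backward uniformly to all $t\in(0,1/2]$. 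Combined with $\|\Phi(\cdot,t)\|_\infty\le Ct^{-n/2}$, setting $s=t_0/2$ in the representation makes the first integral $\le Ct_0^{-n/2}$, while $R_r=O(t_0)$ is harmless.

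The remaining task, which I expect to be the central obstacle, is controlling the nonlinear integral $\int_{t_0/2}^{t_0}\!\!\int_B\Phi(x_0-y,t_0-\tau)(u+1)^{1+2/n}dy\,d\tau$ at the rate $O(t_0^{-n/2})$. Naively substituting the target bound $u\lesssim\tau^{-n/2}$ yields $(u+1)^{1+2/n}\lesssim\tau^{-(n+2)/2}$ which, after the smoothing factor $(t_0-\tau)^{-n/2}$, is not integrable in $\tau$ for $n\ge 2$. The remedy is a Moser-type bootstrap using the heat-semigroup smoothing $\|\Phi_t\ast f\|_r\le Ct^{-\frac{n}{2}(\frac{1}{q}-\frac{1}{r})}\|f\|_q$: one relates $\|u(\cdot,t_0)\|_r$ to $\|u(\cdot,t_0/2)\|_q$ plus an integral of $\|u(\cdot,\tau)\|_{q'(1+2/n)}^{1+2/n}$, and chooses the exponents so that $q'(1+2/n)=q$, making the scaling-invariant time-rate $\alpha_r=\frac{n}{2}(1-1/r)$ self-consistent while the integrability condition $\frac{n+2}{2q}-\frac{n}{2r}<1$ forces $q$ to ratchet upward through finitely many intermediate $L^q$-norms; a final step with $r=\infty$ using $q>(n+2)/2$ then closes the argument. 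This iteration closes exactly at the Fujita critical exponent $p=1+2/n$ and fails for any larger $p$, which is consistent with the sharpness stated in the paper.
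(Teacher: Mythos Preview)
Your representation inequality and the uniform $L^1$-in-space bound via the eigenfunction test are correct and match the paper's Lemma~2.1. The gap is in the bootstrap: at the critical exponent $p=1+2/n$ it cannot be initiated from the $L^1$ bound. Your Duhamel step needs $\|(u+1)^{1+2/n}(\cdot,\tau)\|_{L^{q'}}$ with $q'\ge 1$, i.e., $\|u(\cdot,\tau)\|_{L^q}$ with $q=q'(1+2/n)\ge(n+2)/n>1$, which you do not have. Equivalently, your own integrability constraint $\tfrac{n+2}{2q}-\tfrac{n}{2r}<1$ at $q=1$ forces $r<1$, so there is no admissible first step. This is intrinsic to criticality: the scaling $u\mapsto \lambda^n u(\lambda x,\lambda^2 t)$ leaves both the differential inequality and the target estimate invariant, so a direct iteration neither gains nor loses and cannot close without an additional smallness input.

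The paper supplies exactly that smallness, by contradiction and rescaling. If the bound fails along $(x_j,t_j)$, rescale parabolically about $(x_j,t_j)$ to obtain $u_j$ with $Hu_j=f_j$; since $Hu\in L^1_{\mathrm{loc}}$ (also a consequence of the eigenfunction argument, which you did not exploit), one gets $\|f_j\|_{L^1(E_1(0,0))}\to 0$. Two things follow. First, the heat potential of $f_j$ places $u_j$ in $L^q(E_1(0,0))$ for every $q<(n+2)/n$ with norm tending to zero --- this is the starting regularity your scheme lacks. Second, and this is the key algebraic identity specific to $p=1+2/n$, writing $v_j=u_j+C$ one has
\[
\Bigl(\frac{Hv_j}{v_j}\Bigr)^{(n+2)/2}=Hu_j\Bigl(\frac{Hu_j}{v_j^{(n+2)/n}}\Bigr)^{n/2}\le Hu_j=f_j,
\]
so the ``potential'' $Hv_j/v_j$ has $L^{(n+2)/2}$ norm tending to zero. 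This smallness is precisely what allows a space--time Moser--Brezis iteration (the paper's Lemma~2.3) to upgrade $\|v_j\|_{L^\lambda}$ to $\|v_j\|_{L^{\lambda(n+2)/n}}$ with constants independent of~$j$, eventually bounding $u_j$ in arbitrarily high $L^p$ and contradicting $u_j(0,0)\to\infty$. Your direct semigroup bootstrap has no analogue of this smallness and therefore stalls at the very first step.
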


We proved Theorem \ref{thm1.1} in \cite{T} with the strong added
assumption that, 
\begin{equation}\label{neq1.10}
\text{for some } x_0\in\Omega, \, u \text{ is continuous on }
(\Omega\times [0,1)) - \{(x_0,0)\}.
\end{equation}

Theorem \ref{thm1.1} is optimal in two ways. First, the exponent
$-n/2$ on $t$ in \eqref{neq1.2} cannot be improved because, as already
pointed out, the Gaussian \eqref{neq1.3} 
is a $C^\infty$ nonnegative solution of
the heat equation in ${\bb R}^n\times{\bb R} - \{(0,0)\}$ satisfying 
\eqref{neq1.4}.

And second, the exponent $1+2/n$ in \eqref{neq1.8} cannot
be increased by the following theorem in \cite{T}.

\begin{thm}\label{thm1.2}
  Let $p>1+2/n$ and $\psi\colon (0,1)\to
  (0,\infty)$ be a continuous function. Then there exists a $C^\infty$
  nonnegative solution $u(x,t)$ of
\[
 0 \le u_t -\Delta u \le u^p
\quad \text{in}\quad ({\bb R}^n\times {\bb R}) - \{(0,0)\}
\]
satisfying $u\equiv 0$ in ${\bb R}^n\times (-\infty,0)$ and 
\[
u(0,t)\ne O(\psi(t)) \quad \text{as}\quad t\to 0^+.
\]
\end{thm}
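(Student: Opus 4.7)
The strategy is to construct $u$ as a locally finite sum $u=\sum_{k\ge 1} u_k$ of $C^\infty$ nonnegative bumps with pairwise disjoint spacetime supports clustering at $(0,0)$. Each bump $u_k$ is to be concentrated near a point $(0,t_k)$ with $t_k\searrow 0^+$ and to have peak height at least $k\psi(t_k)$. Pick $t_k$ decreasing rapidly, e.g.\ $t_k=4^{-k}$, and radii $r_k^2<t_k/4$ so that the parabolic cylinders $Q_k:=\overline B_{r_k}(0)\times[t_k-r_k^2,t_k+r_k^2]\subset{\bb R}^n\times(0,1)$ are pairwise disjoint. Given, for each $k$, a $C^\infty$ nonnegative $u_k$ supported in $Q_k$ with $0\le(u_k)_t-\Delta u_k\le u_k^p$ and $u_k(0,t_k)\ge k\psi(t_k)$, the sum $u:=\sum_k u_k$ is automatically $C^\infty$ on $({\bb R}^n\times{\bb R})\setminus\{(0,0)\}$ (the supports are disjoint and accumulate only at the origin), inherits the two inequalities pointwise (at every point at most one $u_k$ is nonzero), vanishes on ${\bb R}^n\times(-\infty,0]$, and satisfies $u(0,t_k)\ge k\psi(t_k)\to\infty$, so $u(0,t)\ne O(\psi(t))$.

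The individual $u_k$ are produced by rescaling a single master bump. Fix once and for all an $\omega\in C^\infty_c({\bb R}^n\times{\bb R})$ with $\omega\ge 0$, $\omega(0,0)=1$, support in the unit parabolic ball, and $0\le\omega_t-\Delta\omega\le\omega^p$ pointwise. Using the scale invariance of the inequality under the transformation $v(x,t):=\lambda^{2/(p-1)}\omega(\lambda x,\lambda^2 t)$, put
\[
u_k(x,t):=\lambda_k^{2/(p-1)}\omega\bigl(\lambda_k x,\lambda_k^2(t-t_k)\bigr),
\]
which again satisfies $0\le(u_k)_t-\Delta u_k\le u_k^p$, has peak $\lambda_k^{2/(p-1)}$ at $(0,t_k)$, and support in $\overline B_{1/\lambda_k}(0)\times[t_k-1/\lambda_k^2,t_k+1/\lambda_k^2]$. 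Choosing $\lambda_k$ large enough forces both $\lambda_k^{2/(p-1)}\ge k\psi(t_k)$ and $1/\lambda_k\le r_k$, so $\mathrm{supp}\,u_k\subset Q_k$.

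The crux of the argument, and the place where the hypothesis $p>1+2/n$ enters essentially, is the construction of the master bump $\omega$. The delicate point is that on the boundary of $\mathrm{supp}\,\omega$ both sides of $\omega_t-\Delta\omega\le\omega^p$ must vanish, and generically the right side vanishes much faster than the left. The condition $p>1+2/n$ is equivalent to $2/(p-1)<n$, i.e.\ the self-similar blow-up exponent for $u_t-\Delta u=u^p$ is smaller than the Gaussian spreading exponent; this slack provides just enough room to build a smooth compactly supported profile satisfying the inequalities, for example by truncating and mollifying a suitable self-similar solution of the semilinear heat equation. The arrangement is sharp: for $p\le 1+2/n$ no such $\omega$ can exist, since Theorem \ref{thm1.1} would then force $u=O(t^{-n/2})$ on compact sets, ruling out the desired blow-up for any $\psi$ that grows faster than $t^{-n/2}$.
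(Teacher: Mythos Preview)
The paper does not itself prove Theorem~\ref{thm1.2}; the result is quoted from \cite{T}. More importantly, your plan has a genuine gap: the master bump $\omega\in C^\infty_c({\bb R}^n\times{\bb R})$ with $\omega\ge 0$, $\omega(0,0)=1$, and $0\le H\omega\le\omega^p$ does not exist, for any $p$. Any smooth compactly supported $\omega$ satisfies the Duhamel identity $\omega=\Phi*(H\omega)$; taking $(x,t)$ with $t$ larger than every point of the support of $\omega$ gives
\[
0=\omega(x,t)=\int_{-\infty}^{t}\int_{{\bb R}^n}\Phi(x-y,t-s)\,(H\omega)(y,s)\,dy\,ds,
\]
and since $\Phi(x-y,t-s)>0$ for every $s<t$ while $H\omega\ge 0$, this forces $H\omega\equiv 0$ and hence $\omega\equiv 0$. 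Equivalently, $\omega$ would be a nonnegative supersolution of the heat equation on all of ${\bb R}^n\times{\bb R}$ that vanishes outside a compact set, and the strong minimum principle then gives $\omega\equiv 0$. So the hypothesis $p>1+2/n$ cannot be what makes the bump exist; the obstruction is the combination of $H\omega\ge 0$ with compact support in forward time, and your final paragraph misidentifies it.

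A construction that works cannot use disjointly supported pieces each satisfying the inequalities on its own. The construction in \cite{T} is along different lines. Roughly, one sets $u=\Phi*f$ with $f=\sum_k a_k\varphi_k\ge 0$ a smooth function supported in ${\bb R}^n\times(0,1)$ whose support accumulates only at $(0,0)$; then $Hu=f\ge 0$ automatically, $u$ is $C^\infty$ on $({\bb R}^n\times{\bb R})\setminus\{(0,0)\}$ and vanishes for $t\le 0$, and one must only arrange $f\le u^p$ on each $\mathrm{supp}\,\varphi_k$. Here $u$ is \emph{not} supported in $\bigcup_k\mathrm{supp}\,\varphi_k$: heat from each source spreads forward in time, and the needed lower bound for $u$ on $\mathrm{supp}\,\varphi_k$ comes from these propagated contributions. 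The condition $p>1+2/n$ enters when one solves the resulting recursive constraints on the heights $a_k$, the times $t_k$, and the scales $r_k$ so that both $f\le u^p$ and $u(0,t_k)/\psi(t_k)\to\infty$ can be achieved simultaneously.
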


Our next result deals with nonnegative solutions of \eqref{neq1.7}
satisfying Dirichlet boundary conditions. 

\begin{thm}\label{thm1.3}
Suppose $u\in C^{2,1}(\ovl\Omega\times (0,1))$ is
a nonnegative solution of 
\begin{equation}\label{neq1.12}
\begin{split}
           0\le u_t-\Delta u\le (u + 1)^p \qquad &\hbox{in} 
           \quad \Omega\times (0,1)\\
           u=0 \qquad \qquad \qquad \qquad &\hbox{on} \quad 
           \partial\Omega\times (0,1),
\end{split}
\end{equation}
where $1<p<1+2/(n+1)$ and
$\Omega$ is a $C^2$ bounded domain in ${\bb R}^n$, $n\ge 1$. 
Then there exists a positive constant $C$ such that
$u$ satisfies \eqref{neq1.6}.
\end{thm}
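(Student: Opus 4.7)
The plan is to combine the interior estimate of Theorem~\ref{thm1.1} with the Duhamel representation for the Dirichlet heat semigroup on $\Omega$ and a bootstrap on the blow-up exponent that closes precisely when $p<1+2/(n+1)$.

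Since $1<p<1+2/(n+1)<1+2/n$, Theorem~\ref{thm1.1} already applies and yields $u(x,t)\le C_K t^{-n/2}$ for every compact $K\subset\Omega$. This implies \eqref{neq1.6} on the ``deep interior'' set $\{\rho(x)\ge\sqrt t\}$, so the real work is near $\partial\Omega$. Writing $h:=u_t-\Delta u\in[0,(u+1)^p]$, I would use the Duhamel representation starting from time $t/2$:
\[
u(x,t)=\int_\Omega G_\Omega(x,t;y,t/2)u(y,t/2)\,dy+\int_{t/2}^{t}\!\!\int_\Omega G_\Omega(x,t;y,\sigma)h(y,\sigma)\,dy\,d\sigma,
\]
where $G_\Omega$ denotes the Dirichlet heat kernel on $\Omega$. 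The $C^2$ regularity of $\partial\Omega$ supplies the standard sharp two-sided bound
\[
G_\Omega(x,t;y,\sigma)\le C\bigl(1\wedge\tfrac{\rho(x)}{\sqrt{t-\sigma}}\bigr)\bigl(1\wedge\tfrac{\rho(y)}{\sqrt{t-\sigma}}\bigr)(t-\sigma)^{-n/2}e^{-c|x-y|^2/(t-\sigma)},
\]
which integrated in $y$ gives $\int_\Omega G_\Omega(x,t;y,\sigma)\,dy\le C(1\wedge\rho(x)/\sqrt{t-\sigma})$.

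I would then run a bootstrap on the exponent $\alpha_k$ in a pointwise bound of the form
\[
u(x,t)\le A\bigl(1\wedge\rho(x)/\sqrt t\bigr)t^{-\alpha_k}\quad\text{on }\Omega\times(0,1/2).
\]
Plugging this into the source estimate $(u+1)^p\le C+CA^p(1\wedge\rho(y)/\sqrt\sigma)^p\sigma^{-p\alpha_k}$ and evaluating the Duhamel integral (splitting the near-boundary regime $\rho(x)<\sqrt t$ further according to whether $\sqrt{t-\sigma}\le\rho(x)$ or not) yields
\[
u(x,t)\le C\bigl(1\wedge\rho(x)/\sqrt t\bigr)\bigl[t^{-(n+1)/2}+A^p t^{1-p\alpha_k}\bigr],
\]
i.e.\ an improved exponent $\alpha_{k+1}=\max\{(n+1)/2,\,p\alpha_k-1\}$. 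The hypothesis $p<1+2/(n+1)$ is exactly the statement $1/(p-1)>(n+1)/2$, which says that the fixed point of the affine map $\alpha\mapsto p\alpha-1$ lies strictly above the target $(n+1)/2$. Therefore, for any seed $\alpha_0\in[(n+1)/2,\,1/(p-1))$, the iteration strictly decreases $\alpha_k$, and after finitely many steps it hits $(n+1)/2$, establishing \eqref{neq1.6}.

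The main obstacle I expect is producing the initial seed bound, since Theorem~\ref{thm1.1} supplies only an interior estimate. I would obtain it by a barrier argument in a tubular collar $\{\rho<\delta\}$: solve the linear heat equation there with Dirichlet data $0$ on $\partial\Omega$ and lateral data matched to the bound $C(\delta)t^{-n/2}$ from Theorem~\ref{thm1.1} on $\{\rho=\delta\}$, producing a supersolution of the form $C(1\wedge\rho(x)/\sqrt t)t^{-\alpha_0}$ with $\alpha_0$ only slightly above $n/2$. Since $n/2<(n+1)/2<1/(p-1)$, there is ample room to place $\alpha_0$ below the fixed-point threshold of the iteration, and absorbing the subcritical nonlinearity $(u+1)^p$ into this barrier on a sufficiently short initial time interval (via the parabolic maximum principle applied to the difference) starts the bootstrap. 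The other technical point requiring care is verifying that the heat kernel integral of $(1\wedge\rho(y)/\sqrt\sigma)^p$ against $G_\Omega$ reproduces the clean boundary factor $1\wedge\rho(x)/\sqrt t$ with the correct power of $t$; this is a routine but intricate calculation using the Gaussian factor and parabolic scaling.
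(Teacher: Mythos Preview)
Your outline has two genuine gaps that prevent the bootstrap from closing.

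\textbf{The Duhamel step does not produce the $t^{-(n+1)/2}$ term.} Starting from time $t/2$, the linear part is
\[
\int_\Omega G_\Omega(x,t;y,t/2)\,u(y,t/2)\,dy,
\]
and under the inductive hypothesis $u(y,t/2)\le A\bigl(1\wedge\rho(y)/\sqrt{t/2}\,\bigr)(t/2)^{-\alpha_k}$ this is bounded, after integrating the kernel, by $CA\bigl(1\wedge\rho(x)/\sqrt t\,\bigr)\,t^{-\alpha_k}$, not by $t^{-(n+1)/2}$. Your recurrence is therefore $\alpha_{k+1}=\max\{\alpha_k,\,p\alpha_k-1\}$, which never improves: if $\alpha_k<1/(p-1)$ the max is $\alpha_k$, and if $\alpha_k>1/(p-1)$ the max is strictly larger. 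The exponent $(n+1)/2$ in the linear part only appears if one represents $u$ from time $0$ and controls the ``initial trace'' piece via an a~priori bound of the type $\int_\Omega \rho(y)\,u(y,t)\,dy\le C$ (equivalently $\rho\,Hu\in L^1(\Omega\times(0,T))$). That is exactly the content of Lemma~\ref{lem2.2} in the paper, and it is not a consequence of Duhamel from $t/2$.

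\textbf{The seed barrier is circular.} Since $Hu\ge 0$, the function $u$ is a \emph{supersolution} of the heat equation; a solution of the linear heat equation in the collar with the same lateral and boundary data therefore lies \emph{below} $u$, not above. To bound $u$ from above you need a barrier $w$ with $Hw\ge (u+1)^p$ in the collar, and absorbing the right-hand side requires an a~priori pointwise bound on $u$ there, which is precisely what you are trying to establish. Theorem~\ref{thm1.1} gives nothing in the collar, and the constant $C(\delta)$ it provides on $\{\rho=\delta\}$ is not uniform as $\delta\to0$, so you cannot seed a global estimate $u(x,t)\le A\,t^{-\alpha_0}$ with $\alpha_0<1/(p-1)$ this way.

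For comparison, the paper does not attempt a pointwise exponent bootstrap at all. It first proves Lemma~\ref{lem2.2}, giving $\rho\,Hu\in L^1(\Omega\times(0,T))$ together with the representation-from-zero estimate \eqref{eq2.20} whose ``linear'' part already carries the sharp rate $(1\wedge\rho(x)/\sqrt t)\,t^{-(n+1)/2}$. It then argues by contradiction: along a violating sequence $(x_j,t_j)$ one rescales parabolically by $\sqrt{t_j}$, so that the $L^1$ smallness of $\rho\,Hu$ forces the rescaled sources $f_j\rho_j$ to vanish in $L^1$, and a Brezis--Moser type $L^q$ iteration (Lemma~\ref{lem2.3}) upgrades this to uniform $L^q$ bounds for every $q$, contradicting the assumed blow-up. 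The restriction $p<1+2/(n+1)$ enters not through a fixed-point of $\alpha\mapsto p\alpha-1$, but through the integrability exponent $q<(n+2)/(n+1)$ needed in \eqref{eq4.11} and through the positivity of $\alpha=(n+1)\bigl((n+3)/(n+1)-p\bigr)$ in \eqref{eq4.19}.
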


Note that the bound \eqref{neq1.6} for $u$ in Theorem \ref{thm1.3}
is, like $u$, zero on
$\partial\Omega\times (0,1)$.
Furthermore, the estimate \eqref{neq1.6} is optimal for $x$ near the
boundary of $\Omega$ and $t$ small. More precisely,  
let 
$x_0\in\partial\Omega$, $G(x,y,t)$ be the heat kernel of the Dirichlet
Laplacian in $\Omega\times (0,1)$, and $\eta$ be the unit inward
normal to $\Omega$ at $x_0$. Then using the lower bound for $G$ in
\cite{Z}, it is easy to show that 
\[
u(x,t):=\lim_{r\to 0^+}\frac{G(x,x_0+r\eta,t)}{r}
\]
is a nonnegative solution of \eqref{neq1.5}, and hence of
\eqref{neq1.12}, such that for some $T>0$
\[
\frac{u(x,t)}{(\frac{\rho(x)}{\sqrt{t}}\wedge 1)/\sqrt{t}^{n+1}}
\]
is bounded between positive constants for all
$(x,t)\in\Omega\times(0,T)$ satisfying $|x-x_0|<\sqrt{t}$.

By modifying the proof of Theorem \ref{thm1.2}, it can be shown that
Theorem \ref{thm1.3} is not true for $p>1+2/n$. An open question is
for what values of $p\in[1+2/(n+1),1+2/n]$ is Theorem \ref{thm1.3}
true.  

Philippe Souplet communicated to us a proof of Theorem \ref{thm1.3} in the
special case that the differential inequalities in problem
\eqref{neq1.12} are replaced with the equation $u_t-\Delta
u=u^p$. However his method of proof does not seem to work for Theorem
\ref{thm1.3} as stated. See also \cite[Theorem 26.14(i)]{QS}.

Theorems \ref{thm1.1} and \ref{thm1.3} can be strengthened by
replacing the term 1 on the right sides of \eqref{neq1.8} and
\eqref{neq1.12} with a term which tends to infinity as $t\to 0^+$.
We state and prove
these strengthend versions of Theorems \ref{thm1.1} and \ref{thm1.3}
in Sections \ref{sec3} and \ref{sec4} respectively.

The proofs of Theorems \ref{thm1.1} and \ref{thm1.3} rely heavily on
Lemmas \ref{lem2.1} and \ref{lem2.2}, respectively, which we state and
prove in Section \ref{sec2}. We are able to prove Theorem \ref{thm1.1}
without condition \eqref{neq1.10} because we do not impose this kind
of condition on the function $u$ in Lemma \ref{lem2.1}.

As in \cite{T}, a crucial step
in the proofs of Theorems \ref{thm1.1} and \ref{thm1.3} is an
adaptation and extension to parabolic inequalities of a method of
Brezis \cite{B} concerning elliptic equations and based on Moser's
iteration. This method is used to obtain an estimate of the form 
\[
\|u_j\|_{L^{\frac{n+2}{n}q}(\Omega')}
\le C\|u_j\|_{L^{q}(\Omega^)}
\]
where $q>1$, $\Omega'\subset\Omega$, 
$C$ is a constant which does not depend on $j$,
and $u_j$, $j=1,2,\dots$, is obtained from the
function $u$ in Theorem \ref{thm1.1} or \ref{thm1.3}
by appropriately scaling $u$ about 
$(x_j,t_j)$ where $(x_j,t_j)\in\Omega\times(0,1)$ is a sequence 
such that $t_j\to 0^+$ and for
which \eqref{neq1.2} or \eqref{neq1.6} is violated.

Our proofs also rely on upper and lower bounds for the heat kernel of
the Dirichlet Laplacian. We use the upper bound in \cite{H} and the
lower one in \cite{Z}.

The blow-up of solutions of the {\it equation} 
\begin{equation}\label{neq1.14}
u_t - \Delta u = u^p
\end{equation}
has been extensively studied in \cite{AD, AHV, BV, BF, BPT, GK, G, HV, K, MM,
  MZ, M, O, PQS, PY, QSW, V} and elsewhere. The book \cite{QS} is an excellent
  reference for many of these results. However, other than
  \cite{T}, we know of no previous blow-up results for the {\it inequalities}
\[
 0 \le u_t -\Delta u \le u^p.
\]
Also, blow-up of solutions of 
$au^p \le u_t -\Delta u \le u^p
$,
where $a\in(0,1)$, has been studied in \cite{T1}.

\section{Preliminary lemmas}\label{sec2}

\indent  

For the proof in Section \ref{sec3} of Theorem \ref{thm1.1}, we will
need the following lemma.

\begin{lem}\label{lem2.1}
Suppose $u$ is a $C^{2,1}$ nonnegative solution of
\begin{equation}\label{eq2.1}
 Hu\ge 0\quad \text{in}\quad B_4(0)\times (0,3) 
\subset {\bb R}^n \times {\bb R},\qquad n\ge 1,
\end{equation}
where $Hu = u_t-\Delta u$ is the heat operator. Then
\begin{equation}\label{eq2.2}
 u,Hu \in L^1(B_2(0) \times (0,2))
\end{equation}
and there exist a finite positive Borel measure $\mu$ on
$B_2(0)$ and $h\in C^{2,1}(B_1(0) \times (-1,1))$ satisfying
\begin{alignat}{2}
 \label{eq2.3}
Hh &= 0  \quad \text{in} &\quad &B_1(0) \times (-1,1)\\
\label{eq2.4}
h &= 0 \quad \text{in} &\quad &B_1(0)\times (-1,0] 
\end{alignat}
such that
\begin{equation}\label{eq2.5}
 u = N +v+h\quad \text{in}\quad B_1(0)\times (0,1)
\end{equation}
where
\begin{align}\label{eq2.6}
 N(x,t) &:= \int^2_0 \intl_{|y|<2} \Phi(x-y,t-s) Hu(y,s)\,dy\,ds,\\
\label{eq2.7}
v(x,t) &:= \intl_{|y|<2} \Phi(x-y,t)\,d\mu(y),
\end{align}
and $\Phi$ is the Gaussian \eqref{neq1.3}.
\end{lem}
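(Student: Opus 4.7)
The plan is a parabolic Riesz decomposition: I will write the nonnegative supercaloric function $u$ as $N+v+h$, where $N$ is the heat potential of $Hu$, $v$ is the heat potential of the initial trace $\mu$, and $h$ is a smooth caloric remainder. First, for the integrability \eqref{eq2.2}, I apply the parabolic representation formula for $u$ on $B_3(0)\times(\epsilon,5/2)$ ($0<\epsilon<2$) using the Dirichlet heat kernel $G$ on $B_3(0)$; this expresses $u(0,5/2)$ as the sum of three nonnegative terms (the initial-data, boundary, and source integrals; nonnegative by $u\geq 0$, $Hu\geq 0$, and the Hopf inequality $\partial_n G\leq 0$). Using a positive lower bound $G(0,y,\tau)\geq c>0$ on the compact set $\overline{B_2}\times[1/2,5/2]$, the initial and source terms give $\int_{B_2}u(\cdot,\epsilon)\,dy\leq\frac{1}{c}u(0,5/2)$ uniformly in $\epsilon$, and $\int_0^2\int_{B_2}Hu\leq\frac{1}{c}u(0,5/2)$; combined with the boundedness of $u$ on $B_2\times[1,2]$, these yield \eqref{eq2.2}.

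Having \eqref{eq2.2}, the function $N$ defined by \eqref{eq2.6} satisfies $HN=Hu\cdot\mathbf{1}_{B_2\times(0,2)}$ in the distributional sense. To extract the initial trace, I fix $\phi\in C_c^\infty(B_2)$ and set $F_\phi(t):=\int u(\cdot,t)\phi\,dx$. Then $F_\phi\in C^1(0,2)$ with
\[
F_\phi'(t)=\int(Hu)\phi\,dx+\int u\,\Delta\phi\,dx,
\]
and \eqref{eq2.2} makes $F_\phi'$ an $L^1(0,2)$ function of $t$, so $F_\phi$ extends continuously to $t=0^+$. Define $\mu(\phi):=\lim_{t\to 0^+}F_\phi(t)$. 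This is a positive linear functional on $C_c(B_2)$ (since $u\geq 0$), continuous in the sup norm thanks to the uniform bound $\sup_{0<t<2}\int_{B_2}u(\cdot,t)\,dx<\infty$ obtained above; hence by Riesz--Markov $\mu$ is represented by a finite positive Borel measure on $B_2$.

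Define $v$ by \eqref{eq2.7} and set $h:=u-N-v$ on $B_1(0)\times(0,1)$, extended by $h\equiv 0$ on $B_1(0)\times(-1,0]$. On $B_1\times(0,1)\subset B_2\times(0,2)$ one has $Hh=0$ distributionally, so $h$ is classically $C^{2,1}$ caloric there by hypoellipticity, yielding \eqref{eq2.5}. For \eqref{eq2.3}--\eqref{eq2.4}, I must check that $h$ is $C^{2,1}$ across $t=0$. Direct computation using $Hu\in L^1$ shows $N(\cdot,t)\to 0$ in $L^1(B_2)$, while $u(\cdot,t)\rightharpoonup\mu$ weakly by construction and $v(\cdot,t)\rightharpoonup\mu$ weakly by standard heat semigroup continuity, so $h(\cdot,t)\to 0$ distributionally on $B_1$. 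Testing $Hh$ against $\psi\in C_c^\infty(B_1\times(-1,1))$ and integrating by parts in $t$ (the boundary term at $t=\epsilon$ vanishes as $\epsilon\to 0^+$ by the uniform $L^1_x$ bound on $h$) then gives $Hh=0$ in $\mathcal D'(B_1\times(-1,1))$, and hypoellipticity of the heat operator promotes $h$ to a $C^\infty$ caloric function on $B_1\times(-1,1)$, as required. The main obstacle is the first step, where positivity of all three terms in the representation formula must be exploited to extract uniform $L^1$ bounds on both $u$ and $Hu$ from the single value $u(0,5/2)$; a secondary difficulty is the distribution-to-classical bootstrap of $h$ across $t=0$, which depends crucially on $Hu\in L^1$ so that $N(\cdot,t)\to 0$.
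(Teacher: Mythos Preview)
Your proof is correct and follows the same overall architecture as the paper's---a parabolic Riesz decomposition $u=N+v+h$ with a distributional argument to show $Hh=0$ across $t=0$---but two of the steps are carried out by genuinely different means. For the $L^1$ bounds \eqref{eq2.2}, the paper multiplies by the first Dirichlet eigenfunction $\varphi_1$ on $B_3(0)$ and observes that $U(t)=\int u\varphi_1$ satisfies $(Ue^{\lambda t})'\ge 0$, whence $U$ is bounded near $t=0$; your route via the Green representation $u(0,5/2)=(\text{initial})+(\text{source})+(\text{boundary})$ with all three terms nonnegative is equally valid and perhaps more transparent, since it ties the bounds directly to a single interior value of $u$. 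For the initial trace $\mu$, the paper uses only weak-$*$ compactness of $\{u(\cdot,t)\,dx\}$ to extract a subsequential limit $t_j\to 0$, whereas you show the full limit exists by noting that $F_\phi'(t)=\int(Hu)\phi+\int u\,\Delta\phi\in L^1(0,2)$; this is a genuine improvement, since it removes the need to carry the subsequence $t_j$ through the subsequent distributional computation. The final step---showing $Hh=0$ in $\mathcal D'(B_1\times(-1,1))$ by integrating by parts on $(\epsilon,1)$ and killing the boundary term via the weak convergence $h(\cdot,\epsilon)\rightharpoonup 0$ together with the uniform $L^1_x$ bound---is essentially identical to the paper's argument (the paper phrases it with a time cutoff $\psi_\epsilon$ rather than a hard truncation, but the content is the same).
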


\begin{proof}
Let $\varphi_1\in C^2(\ovl{B_3(0)})$ and $\lambda>0$ 
satisfy
\[
 \begin{array}{cl}
\left.\begin{array}{c} -\Delta\varphi_1= \lambda\varphi_1\\ 
\varphi_1>0\end{array}\right\}&\text{for } |x|<3\\
\varphi_1=0&\text{for } |x|=3.
 \end{array}
\]
Then for $0<t\le 2$, we have by \eqref{eq2.1} that
\begin{align*}
 0 &\le \intl_{|x|<3} [Hu(x,t)]\varphi_1(x)\,dx\\
&= \intl_{|x|<3} u_t(x,t) \varphi_1(x)\,dx 
+ \lambda \intl_{|x|<3} u(x,t) \varphi_1(x)\,dx 
+ \intl_{|x|=3} u(x,t) \frac{\partial\varphi_1(x)}{\partial \eta} \,dS_x\\
&\le U'(t) + \lambda U(t)
\end{align*}
where $U(t) = \intl_{|x|<3} u(x,t) \varphi_1(x)\,dx$. Thus
$(U(t)e^{\lambda t})'\ge 0$ for $0<t\le 2$ and consequently for some
$U_0\in [0,\infty)$ we have
\begin{equation}\label{eq2.9}
 U(t) = (U(t)e^{\lambda t}) e^{-\lambda t}\to U_0
\quad \text{as}\quad t\to 0^+.
\end{equation}
Thus $u\varphi_1 \in L^1(B_3(0)\times (0,2))$. 
Hence, since for $0<t\le 2$,
\begin{align}
  \int^2_t \intl_{|x|<3} Hu(x,\tau) \varphi_1(x)\,dx\,d\tau &=
  \intl_{|x|<3} 
\left(\int^2_t u_t(x,\tau)\,d\tau\right)\varphi_1(x)\,dx 
- \int^2_t \intl_{|x|<3} (\Delta u(x,\tau)) \varphi_1(x) \,dx \,d\tau\notag\\
  &= \intl_{|x|<3} u(x,2) \varphi_1(x)\,dx 
- \intl_{|x|<3} u(x,t) \varphi_1(x)\,dx\notag\\
  &\quad + \int^2_t \intl_{|x|=3} u(x,\tau) 
\frac{\partial\varphi_1(x)}{\partial \eta} \,dS_x\,d\tau \notag\\
\label{eq2.8}
&\quad +\lambda\int^2_t \intl_{|x|<3} u(x,\tau) \varphi_1(x) \,dx\,d\tau,
\end{align}
we see that $(Hu)\varphi_1 \in
L^1(B_3(0)\times (0,2))$. So \eqref{eq2.2} holds.

By \eqref{eq2.9},
\begin{equation}\label{eq2.10}
 \intl_{|x|\le 2} u(x,t)\,dx\quad \text{is bounded for}\quad 0<t\le 2.
\end{equation}
Hence there exists a finite positive Borel measure $\hat\mu$ on
$\ovl{B_2(0)}$ and a sequence $t_j$ decreasing to $0$ such that
for all $g\in C(\ovl{B_2(0)})$ we have
\[
 \intl_{|x|\le 2} g(x) u(x,t_j)\,dx \longrightarrow \intl_{|x|\le 2} 
g(x)\,d\hat\mu\quad \text{as}\quad j\to\infty.
\]
In particular, for all $\varphi\in C^\infty_0(B_2(0))$ we have
\begin{equation}\label{eq2.11}
 \intl_{|x|<2} \varphi(x) u(x,t_j) \,dx
\longrightarrow \intl_{|x|<2} \varphi(x)\,d\mu\quad \text{as}
\quad j\to \infty.
\end{equation}
where we define $\mu$ to be the restriction of $\hat\mu$ to $B_2(0)$.

For $(x,t)\in {\bb R}^n\times (0,\infty)$, let $v(x,t)$ be defined by
\eqref{eq2.7}. Then $v\in C^{2,1}({\bb R}^n \times (0,\infty))$,
$Hv=0$ in ${\bb R}^n\times (0,\infty)$, and
\begin{equation}\label{eq2.13}
 \intl_{{\bb R}^n} v(x,t)\,dx = \intl_{|y|<2} d\mu(y) 
< \infty\quad \text{for}\quad t>0.
\end{equation}
Thus $v\in L^1({\bb R}^n\times (0,2))$.

For $\varphi\in C^\infty_0(B_2(0))$ and $t>0$ we have
\[
 \intl_{|x|<2} \varphi(x) v(x,t)\,dx 
= \intl_{|y|<2} \left(\, \intl_{|x|<2} \Phi(x-y,t) 
\varphi(x)\,dx\right) d\mu(y)\\
 \longrightarrow \intl_{|y|<2} \varphi(y)\,d\mu(y)
\quad \text{as}\quad t\to 0^+,
\]
and hence it follows from \eqref{eq2.11} that
\begin{equation}\label{eq2.14}
 \intl_{|x|<2} \varphi(x) (u(x,t_j) - v(x,t_j))\,dx \to 0 
\quad \text{as}\quad j\to \infty.
\end{equation}

Let
\[
 f:= \begin{cases}
      Hu,&\text{in $B_2(0)\times (0,2)$}\\
0,&\text{elsewhere in ${\bb R}^n\times {\bb R}$.}
     \end{cases}
\]
Then by \eqref{eq2.2},
\begin{equation}\label{eq2.15}
 f\in L^1({\bb R}^n\times {\bb R}).
\end{equation}
Let
\[
 w:= \begin{cases}
       u-v,&\text{in $B_2(0)\times (0,2)$}\\
0,&\text{elsewhere in ${\bb R}^n\times {\bb R}$.}
      \end{cases}
\]
Then
\begin{align}
 \label{eq2.16}
&w\in C^{2,1}(B_2(0) \times (0,2))\cap L^1({\bb R}^n\times {\bb R}),\\
&Hw = f \quad \text{in}\quad B_2(0) \times (0,2),\notag
\end{align}
and
\begin{equation}\label{eq2.17}
\intl_{|x|<2} |w(x,t)|\,dx\quad\text{is bounded for}\quad 0<t<2
\end{equation}
by \eqref{eq2.10} and \eqref{eq2.13}. Let $\Omega = B_1(0) \times
(-1,1)$ and define $\Lambda\in {\cl D}'(\Omega)$ by $\Lambda = -Hw+f$,
that is
\[
 \Lambda\varphi = \int wH^*\varphi + \int f\varphi
\quad \text{for}\quad \varphi\in C^\infty_0(\Omega),
\]
where $H^*\varphi := \varphi_t + \Delta\varphi$. We now show
$\Lambda=0$. 
Let $\varphi\in C^\infty_0(\Omega)$, let $j$ be a fixed positive integer, and
let $\psi_\vp\colon {\bb R}\to [0,1]$, $\vp$ small and positive, be a
one parameter family of smooth nondecreasing functions such that 
\[
\psi_\vp(t)= \begin{cases}
                          1,&t>t_j+\vp\\
                          0,&t<t_j-\vp.
             \end{cases}
\]  
where $t_j$ is as in \eqref{eq2.11}.
Then
\begin{align*}
-\int f\varphi\psi_\vp 
&= -\int (Hw) \varphi\psi_\vp = \int wH^*(\varphi\psi_\vp)\\
&= \int w(\varphi_t\psi_\vp + \varphi\psi'_\vp + \psi_\vp \Delta\varphi)\\
&= \int w\psi_\vp H^*\varphi + \int w\varphi\psi'_\vp.
\end{align*}
Letting $\vp\to 0^+$ we get
\begin{equation}\label{eq2.18}
 -\int^1_{t_j} \intl_{|x|<1} f\varphi\,dx\,dt 
= \int^1_{t_j} \intl_{|x|<1} wH^* \varphi\,dx\,dt 
+ \intl_{|x|<1} w(x,t_j) \varphi(x,t_j)\,dx.
\end{equation}
Also, it follows from \eqref{eq2.17} and \eqref{eq2.14} that
\begin{align*}
\intl_{|x|<1} w(x,t_j) \varphi(x,t_j)\,dx 
&= \intl_{|x|<1} w(x,t_j) [\varphi(x,t_j) - \varphi(x,0)] \,dx
+\intl_{|x|<1} w(x,t_j) \varphi(x,0)\,dx\\ 
&\to 0 \quad \text{as} \quad j\to \infty.
\end{align*}
Thus letting $j\to \infty$ in \eqref{eq2.18} and using \eqref{eq2.15} and
\eqref{eq2.16} we get $-\int f\varphi = \int wH^*\varphi$. So
$\Lambda=0$. 

For $(x,t) \in {\bb R}^n\times {\bb R}$, let $N(x,t)$ be
defined by \eqref{eq2.6}. Then
\[
 N(x,t) = \iint\limits_{{\bb R}^n\times {\bb R}} \Phi(x-y,t-s) f(y,s)\,dy\,ds
\]
and $N\equiv 0$ in ${\bb R}^n\times (-\infty,0)$. By \eqref{eq2.15},
we have $N\in L^1(\Omega)$ and $HN=f$ in ${\cl D}'(\Omega)$. Thus
\[
 H(w-N) = -\Lambda+f-f =0 \quad \text{in}\quad {\cl D}'(\Omega)
\]
which implies
\[
 w-N=h\quad \text{in}\quad {\cl D}'(\Omega)
\]
for some $C^{2,1}$ solution $h$ of \eqref{eq2.3} and
\eqref{eq2.4}. Hence \eqref{eq2.5} holds.
\end{proof}

For the proof in Section \ref{sec4} of Theorem \ref{thm1.3}, we will
need the following lemma.

\begin{lem}\label{lem2.2}
Suppose $u\in C^{2,1}(\ovl\Omega\times (0,2T))$ is a nonnegative solution of
\[
Hu \ge 0 \qquad \text{\rm in}\qquad \Omega\times (0,2T),
\]
where $Hu=u_t-\Delta u$ is the heat operator, $T$ is a positive
constant, and $\Omega$ is a bounded $C^2$ domain in ${\bb R}^n$, $n\ge
1$. Then
\begin{equation}\label{eq2.19}
 u,\rho Hu \in L^1(\Omega\times (0,T)),
\end{equation}
where $\rho(x) = \text{\rm dist}(x,\partial\Omega))$. Moreover,
there exists $C>0$ such that 
\begin{equation}\label{eq2.20}
\begin{split}
0&\le u(x,t) - \int^t_0 \int_\Omega G(x,y,t-s) Hu(y,s)\,dy\,ds\\ 
&\le C\frac{\frac{\rho(x)}{\sqrt{t}} \wedge 1}{t^{\frac{n+1}2}}
+ \sup_{\partial\Omega\times(0,T)}u
\qquad \text{for all } (x,t)\in\Omega\times(0,T),
\end{split}
\end{equation}
where $G$ is the Dirichlet heat kernel for $\Omega$.
\end{lem}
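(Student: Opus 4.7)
The plan is to parallel the proof of Lemma~\ref{lem2.1}, with three modifications: the free-space Gaussian $\Phi$ is replaced by the Dirichlet heat kernel $G$; the Dirichlet eigenfunction of a slightly larger ball is replaced by the first Dirichlet eigenfunction of $\Omega$ itself, which vanishes on $\partial\Omega$ and is comparable to $\rho$ by Hopf's lemma; and the representation formula picks up a boundary surface integral coming from the (a priori nonzero) values of $u$ on $\partial\Omega$.

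To establish \eqref{eq2.19}, I would first take $\varphi_1\in C^2(\ovl\Omega)$ and $\lambda_1>0$ solving $-\Delta\varphi_1=\lambda_1\varphi_1$, $\varphi_1>0$ in $\Omega$, $\varphi_1=0$ on $\partial\Omega$, and multiply $Hu\ge 0$ by $\varphi_1$ and integrate by parts to get
\[
0 \le \intl_\Omega (Hu)\varphi_1\,dx = U'(t) + \lambda_1 U(t) + \intl_{\partial\Omega} u\,\frac{\partial\varphi_1}{\partial\nu}\,dS,
\]
where $U(t)=\intl_\Omega u\varphi_1\,dx$ and $\nu$ is the outer unit normal. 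Since $u\ge 0$ and $\partial\varphi_1/\partial\nu\le -c_0<0$, the boundary integral is $\le 0$, giving $(Ue^{\lambda_1 t})'\ge 0$; so $U$ is bounded on $(0,T]$ and converges to some $U_0\in[0,\infty)$ as $t\to 0^+$. Integrating the identity in $t$ then places both nonnegative pieces in $L^1(0,T)$: $\rho Hu \in L^1(\Omega\times(0,T))$ (using $\varphi_1\asymp\rho$) and $u\in L^1(\partial\Omega\times(0,T))$. Next I would repeat the computation with the torsion function $\psi\in C^2(\ovl\Omega)$ satisfying $-\Delta\psi=1$ in $\Omega$, $\psi=0$ on $\partial\Omega$, to obtain $\intl_\Omega u\,dx = \intl_\Omega (Hu)\psi\,dx - V'(t) - \intl_{\partial\Omega} u(\partial\psi/\partial\nu)\,dS$ with $V(t)=\intl_\Omega u\psi\,dx$, and integrate in $t$ to conclude $u\in L^1(\Omega\times(0,T))$.

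For \eqref{eq2.20}, I would fix $\tau\in(0,t)$ and apply Green's identity in $\Omega\times(\tau,t)$ with $v(y,s)=G(x,y,t-s)$ (which satisfies the backward heat equation in $(y,s)$ and vanishes on $\partial\Omega$), obtaining the standard representation
\[
u(x,t) = \intl_\Omega G(x,y,t-\tau)u(y,\tau)\,dy + \intl_\tau^t\intl_\Omega G\cdot Hu\,dy\,ds - \intl_\tau^t\intl_{\partial\Omega} u\,\frac{\partial G}{\partial\nu_y}\,dS_y\,ds.
\]
The third term is bounded in absolute value by $\sup_{\partial\Omega\times(0,T)}u$: the function $w(x,t):=-\intl_0^t\intl_{\partial\Omega}(\partial_{\nu_y}G)(x,y,t-s)\,dS_y\,ds$ solves $Hw=0$, $w|_{\partial\Omega}=1$, $w(\cdot,0)=0$, hence $w\le 1$ by the maximum principle, while $-\partial_{\nu_y}G\ge 0$. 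For the first term I would combine the Hui~\cite{H} bound
\[
G(x,y,t)\le C\Bigl(\frac{\rho(x)}{\sqrt t}\wedge 1\Bigr)\Bigl(\frac{\rho(y)}{\sqrt t}\wedge 1\Bigr)t^{-n/2}e^{-|x-y|^2/(ct)}
\]
with the uniform boundedness of $\intl_\Omega\rho(y)u(y,\tau)\,dy$ (from the previous step) to obtain
\[
\intl_\Omega G(x,y,t-\tau)u(y,\tau)\,dy \le C\,\frac{\rho(x)/\sqrt{t-\tau}\wedge 1}{(t-\tau)^{(n+1)/2}}
\]
uniformly in $\tau$. Rearranging the representation and letting $\tau\to 0^+$, with monotone convergence handling the Duhamel integral, gives the upper inequality in \eqref{eq2.20}; the lower inequality is immediate from the nonnegativity of the first and third terms of the representation.

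The main obstacle is the passage $\tau\to 0^+$: the family $\{u(\cdot,\tau)\}$ is not known to be bounded in $L^1(\Omega)$, only in $L^1(\Omega;\rho\,dx)$, so an initial-trace identification analogous to the measure $\mu$ of Lemma~\ref{lem2.1} is not directly available. The remedy is to avoid identifying an initial trace and instead control the first term of the representation \emph{uniformly in $\tau$} via the Hui kernel bound; then \eqref{eq2.20} follows by passing to the limit in the inequality obtained by rearrangement, rather than in the representation identity itself.
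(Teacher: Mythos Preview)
Your proposal is correct and follows essentially the same route as the paper; the Hui kernel bound, the boundary-term estimate via $\int_\Omega G\le 1$ (equivalently your maximum-principle argument for $w\le 1$), and the $\tau\to 0^+$ passage by monotone convergence are exactly as in the paper's proof. The one minor variation is in showing $u\in L^1(\Omega\times(0,T))$: rather than the torsion function, the paper takes $\varphi=\varphi_1^2$ in the same integration-by-parts identity to obtain $u|\nabla\varphi_1|^2\in L^1$ and then uses that $\varphi_1+|\nabla\varphi_1|^2$ is bounded away from zero on $\ovl\Omega$.
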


\begin{proof}
  For $\varphi\in C^2(\Omega)\cap C^1(\ovl\Omega)$, $\varphi=0$ 
on $\partial\Omega$,  and $0<t<T$ we have
\begin{align}
  \int^T_t \int_\Omega [Hu(y,\tau)] \varphi(y) \,dy\,d\tau &= \int_\Omega
  u(y,T) 
\varphi(y) \,dy - \int_\Omega u(y,t) \varphi(y)\,dy\notag\\
\label{eq2.21}
&\quad -\int^T_t \int_\Omega u(y,\tau) \Delta\varphi(y)\,dy\,d\tau
+\int_t^T\int_{\partial\Omega}u(y,\tau)\frac{\partial\varphi(y)}
{\partial\eta}\,dS_y\,d\tau
\end{align}

Let $\varphi_1\in C^2(\Omega)\cap C^1(\ovl\Omega)$ and $\lambda>0$ satisfy 
\[
\begin{array}{cl}
\left.\begin{array}{r}
-\Delta\varphi_1=\lambda\varphi_1\\ 0<\varphi_1<1
\end{array}\right\}&\text{in } \Omega\\
\varphi_1=0&\text{on } \partial\Omega.
\end{array}
\]
Then for $0<t<2T$ we have
\begin{align*}
0 \le \int_\Omega Hu(y,t) \varphi_1(y)\,dy &= U'(t) + \lambda U(t)
+ \int_{\partial\Omega}u(y,t)\frac{\partial\varphi_1(y)}
{\partial\eta}\,dS_y\\
&\le U'(t) + \lambda U(t),
\end{align*}
where $U(t) = \int_\Omega u(y,t) \varphi_1(y)\,dy$. Thus
$(U(t)e^{\lambda t})'\ge 0$ for $0<t<2T$ and hence for some $U_0\ge 0$
we have
\begin{equation}\label{eq2.22}
 U(t) = (U(t)e^{\lambda t}) e^{-\lambda t} \to U_0 \quad \text{as}\quad t\to 0^+.
\end{equation}
Consequently $u\varphi_1 \in L^1(\Omega\times (0,T))$. So taking
$\varphi=\varphi_1$ in \eqref{eq2.21} we have
\begin{equation}\label{eq2.23}
 \varphi_1 Hu\in L^1(\Omega\times (0,T)),
\end{equation}
and taking $\varphi=\varphi^2_1$ in \eqref{eq2.21} we obtain
$u|\nabla\varphi_1|^2 \in L^1(\Omega\times (0,T))$. Thus, since
$\varphi_1+|\nabla\varphi_1|^2$ is bounded away from zero on
$\ovl\Omega$, we have $u\in L^1(\Omega\times(0,T))$. Hence, since
$\varphi_1/\rho$ is bounded between positive constants on $\Omega$, it
follows from \eqref{eq2.23} that \eqref{eq2.19} holds, and by
\eqref{eq2.22} we have
\begin{equation}\label{eq2.24}
 \int_\Omega u(y,t) \rho(y)\,dy \quad \text{is bounded for}\quad 0<t\le T.
\end{equation}

Let $x\in\Omega$ and $0<\tau<t<T$ be fixed. Then for $\vp>0$ we have
\begin{align}\label{eq2.25}
\begin{split}
\int_\Omega &G(x,y,\vp) u(y,t)\,dy  
 - \int^t_\tau\int_\Omega G(x,y,t+\vp-s) Hu(y,s)\,dy\,ds\\
&=\int_\Omega G(x,y,t+\vp-\tau) u(y,\tau)\,dy
 - \int_\tau^t\int_{\partial\Omega}u(y,s)\frac{\partial G(x,y,t+\vp-s)}
{\partial\eta_y}\,dS_y\,ds\\
&\ge 0.
\end{split}
\end{align}
Since $\int_\Omega G(x,y,\zeta)\,dy\le 1$ for $\zeta>0$, we have
\begin{align*}
0&\le - \int_\tau^t\int_{\partial\Omega}\frac{\partial G(x,y,t+\vp-s)}
{\partial\eta_y}\,dS_y\,ds\\
&=\int_\Omega G(x,y,\vp) \,dy - \int_\Omega G(x,y,t+\vp-\tau)\,dy
\le 1
\end{align*} 
and
\[
\int_\Omega G(x,y,t+\vp-s) Hu(y,s)\,dy \le \max_{\ovl\Omega \times
  [\tau,t]} Hu<\infty
\]
for $\vp>0$ and $\tau\le s\le t$. Thus, letting $\vp\to 0^+$ in
\eqref{eq2.25} and using the fact that the function $(y,\zeta)\to
G(x,y,\zeta)$ is continuous for $(y,\zeta) \in\ovl\Omega \times
(0,\infty)$ we get
\begin{align}\notag  
0&\le u(x,t) -  \int^t_\tau \int_\Omega G(x,y,t-s) Hu(y,s)\,dy\,ds\\
\label{eq2.26}
&\le v(x,t,\tau) + \sup_{\partial\Omega\times(0,T)}u
\end{align}
where
\[
v(x,t,\tau) := \int_\Omega G(x,y,t-\tau)u(y,\tau)\,dy
\le C\frac{\frac{\rho(x)}{\sqrt{t-\tau}} \wedge 1}
{(t-\tau)^{\frac{n+1}2}} \int_\Omega u(y,\tau)\rho(y)\,dy
\]
because, as shown by Hui \cite[Lemma 1.3]{H}, there exists a positive
constant $C=C(n,\Omega,T)$ such that if 
\[
\widehat G(r,t) = \frac{C}{t^{n/2}} e^{-r^2/(Ct)} \quad
\text{for}\quad r\ge 0 \quad \text{and}\quad t>0
\]
then the heat kernel $G(x,y,t)$ for $\Omega$ satisfies
\begin{equation}\label{eq2.27}
  G(x,y,t) \le \left(\frac{\rho(x)}{\sqrt t} \wedge 1\right) 
\left(\frac{\rho(y)}{\sqrt t} \wedge 1\right) \widehat G(|x-y|,t)
\quad \text{for}\quad x,y\in\Omega\quad \text{and}\quad  0<t\le T.
\end{equation}
Hence, letting $\tau\to 0^+$ in \eqref{eq2.26} and using \eqref{eq2.24}
and the monotone convergence theorem we obtain \eqref{eq2.20}.
\end{proof}

For the proofs in Sections \ref{sec3} and \ref{sec4} of Theorems
\ref{thm1.1} and \ref{thm1.3} respectively we will need the following
lemma whose proof is an adaptation to parabolic inequalities of a
method of Brezis \cite{B} for elliptic equations.

\begin{lem}\label{lem2.3}
  Suppose $T>0$ and $\lambda>1$ are constants, $B$ is an open ball in
  ${\bb R}^n$, $E=B\times (-T,0)$, and $\varphi\in C^\infty_0(B\times
  (-T,\infty))$. Then there exists a positive constant $C$ depending
  only on
  \begin{equation}\label{eq1}
n,\lambda,\quad \text{and}\quad \sup_E\left(|\varphi|,
  |\nabla\varphi|, 
\left|\frac{\partial\varphi}{\partial t}\right|, |\Delta\varphi|\right)
\end{equation}
such that if $\Omega$ is a $C^2$ bounded domain in ${\bb R}^n$,
$\Omega\cap B \ne\emptyset$, $D = \Omega \times (-T,0)$, and $u\in
C^{2,1}(\ovl D)$ is a nonnegative solution of
\begin{alignat*}{2}
Hu &\ge 0 &\quad &\text{in } \Omega\times (-T,0)\\
u &= 0&\quad &\text{on } (\partial\Omega\cap B)\times (-T,0)
\end{alignat*}
then
\begin{equation}\label{eq2}
\left(~\iint\limits_{E\cap D} (u^\lambda\varphi^2)^{\frac{n+2}n} 
\, dx\, dt\right)^{\frac{n}{n+2}} 
\le C \left(~\iint\limits_{E\cap D} (Hu)u^{\lambda-1} \varphi^2\,dx\,dt 
+ \iint\limits_{E\cap D} u^\lambda \, dx\, dt\right).
\end{equation}
\end{lem}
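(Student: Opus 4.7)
The plan is to prove a Moser-type parabolic energy estimate by testing $Hu\ge 0$ against $u^{\lambda-1}\varphi^2$, then feed the resulting control of $\sup_t\int(u^{\lambda/2}\varphi)^2\,dx$ and $\iint|\nabla(u^{\lambda/2}\varphi)|^2\,dx\,dt$ into the parabolic Gagliardo--Nirenberg inequality. Fix $t_0\in(-T,0)$ and integrate $(Hu)u^{\lambda-1}\varphi^2\ge 0$ over $\Omega\times(-T,t_0)$. The $u_t$-term becomes
\[
\iint u_t u^{\lambda-1}\varphi^2\,dx\,dt=\frac{1}{\lambda}\int_\Omega u^\lambda\varphi^2(x,t_0)\,dx-\frac{2}{\lambda}\iint u^\lambda\varphi\varphi_t\,dx\,dt,
\]
with no contribution at $t=-T$ since $\varphi$ has compact support in $B\times(-T,\infty)$. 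Integrating $-\iint(\Delta u)u^{\lambda-1}\varphi^2$ by parts in $x$ produces no boundary term on $\partial\Omega$: on $\partial\Omega\cap B$ one has $u^{\lambda-1}=0$, and on $\partial\Omega\setminus B$ one has $\varphi=0$. The outcome is $(\lambda-1)\iint u^{\lambda-2}|\nabla u|^2\varphi^2+2\iint u^{\lambda-1}\varphi\nabla u\cdot\nabla\varphi$.

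Rather than absorb the mixed term via Cauchy--Schwarz, I rewrite $2u^{\lambda-1}\nabla u=\tfrac{2}{\lambda}\nabla(u^\lambda)$ and integrate by parts once more (boundary terms vanish as before) to convert it into $-\tfrac{2}{\lambda}\iint u^\lambda(|\nabla\varphi|^2+\varphi\Delta\varphi)$; this second spatial integration by parts is what generates the dependence of $C$ on $\sup_E|\Delta\varphi|$ noted in \eqref{eq1}. Setting $v:=u^{\lambda/2}$, so that $u^{\lambda-2}|\nabla u|^2=(4/\lambda^2)|\nabla v|^2$, and using $\lambda>1$ to keep the coefficient of $|\nabla v|^2\varphi^2$ strictly positive, I rearrange to obtain, for every $t_0\in(-T,0)$,
\[
\int_\Omega(v\varphi)^2(x,t_0)\,dx+\iint_{\Omega\times(-T,t_0)}|\nabla v|^2\varphi^2\,dx\,dt\le C\biggl[\iint_{E\cap D}(Hu)u^{\lambda-1}\varphi^2+\iint_{E\cap D}u^\lambda\biggr],
\]
with $C$ depending only on the data in \eqref{eq1}. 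Taking the supremum in $t_0$, letting $t_0\to 0^-$ on the gradient term, and combining with $|\nabla(v\varphi)|^2\le 2\varphi^2|\nabla v|^2+2v^2|\nabla\varphi|^2$, I control both $\sup_t\int(v\varphi)^2(\cdot,t)\,dx$ and $\iint|\nabla(v\varphi)|^2\,dx\,dt$ by the right side above.

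Because $u=0$ on $\partial\Omega\cap B$ and $\varphi$ vanishes outside $B$, extending $v\varphi$ by zero off $\Omega\cap B$ gives an $H^1(\mathbb{R}^n)$ function for a.e.\ $t$, with the same spatial gradient norm. The parabolic Gagliardo--Nirenberg inequality
\[
\iint_{\mathbb{R}^n\times\mathbb{R}}w^{2(n+2)/n}\,dx\,dt\le C_n\Bigl(\sup_t\int_{\mathbb{R}^n}w^2(\cdot,t)\,dx\Bigr)^{2/n}\iint|\nabla w|^2\,dx\,dt
\]
applied to $w=v\varphi$, together with taking the $n/(n+2)$ power and Young's inequality $M^{2/(n+2)}N^{n/(n+2)}\le\tfrac{2}{n+2}M+\tfrac{n}{n+2}N$, then converts the product $M^{2/(n+2)}N^{n/(n+2)}$ into the sum appearing in \eqref{eq2}. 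The main technical subtlety is that $u^{\lambda-1}$ need not be $C^1$ on $\{u=0\}$ when $1<\lambda<2$, so the formal manipulations above are rigorously valid only after regularization; I would carry them out with $(u+\varepsilon)^{\lambda-1}\varphi^2$ as the test function and pass to $\varepsilon\to 0^+$ by dominated convergence, permissible because $u$ is bounded on $\mathrm{supp}\,\varphi$ and all integrands admit $\varepsilon$-uniform $L^1$ majorants.
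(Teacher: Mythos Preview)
Your proof is correct and follows essentially the same Moser-type strategy as the paper: test $Hu$ against $u^{\lambda-1}\varphi^2$, extract control of $\iint|\nabla(u^{\lambda/2}\varphi)|^2$ and $\sup_t\int u^\lambda\varphi^2$, and feed both into the parabolic Sobolev/Gagliardo--Nirenberg inequality. The only organizational difference is that the paper integrates in time over the full interval $(-T,0)$ and then bounds $M:=\sup_t\big(\int u^\lambda\varphi^2\big)^{2/n}$ by a separate differential-inequality argument for $\frac{\partial}{\partial t}\int u^\lambda\varphi^2$, whereas you integrate over $(-T,t_0)$ and take the supremum in $t_0$, which yields both pieces at once; your second integration by parts on the mixed term is equivalent to the paper's use of the algebraic identity for $\nabla u\cdot\nabla(u^{\lambda-1}\varphi^2)$ followed by one integration by parts of the $\nabla u^\lambda\cdot\nabla\varphi^2$ term, and both routes produce the stated dependence on $\sup_E|\Delta\varphi|$. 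Your remark about regularizing with $(u+\varepsilon)^{\lambda-1}$ when $1<\lambda<2$ is a point the paper leaves implicit.
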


\begin{proof}
Let $u$ be as in the lemma. Since 
\begin{equation}\label{eq3}
\nabla u \cdot \nabla(u^{\lambda-1}\varphi^2) = \frac{4(\lambda-1)}{\lambda^2} |\nabla(u^{\lambda/2}\varphi)|^2 - \frac{\lambda-2}{\lambda^2} \nabla u^\lambda \cdot \nabla\varphi^2 -  \frac{4(\lambda-1)}{\lambda^2} u^\lambda|\nabla\varphi|^2
\end{equation}
we have for $-T<t<0$ that
\begin{align}\label{eq4}
\int\limits_{B\cap\Omega} (-\Delta u)u^{\lambda-1}\varphi^2\, dx &= \int\limits_{B\cap\Omega} \nabla u \cdot \nabla(u^{\lambda-1}\varphi^2)\, dx\notag\\
&\ge \frac{4(\lambda-1)}{\lambda^2} \int\limits_{B\cap\Omega} |\nabla(u^{\lambda/2}\varphi)|^2\, dx - C \int\limits_{B\cap\Omega} u^\lambda\, dx
\end{align}
where $C$ is a positive constant depending only on the quantities \eqref{eq1} whose value may change from line to line. Also, for $x\in B\cap\Omega$ we have
\begin{align}
\int^0_{-T} u_t u^{\lambda-1}\varphi^2\, dt &= \frac1\lambda \int^0_{-T} \frac{\partial u^\lambda}{\partial t}\varphi^2\ dt\notag\\
&= \frac1\lambda \left[u(x,0)^\lambda \varphi(x,0)^2 - \int^0_{-T} u^\lambda \frac{\partial\varphi^2}{\partial t}\, dt\right]\notag\\
\label{eq5}
&\ge - C \int^0_{-T} u^\lambda\, dt.
\end{align}
Integrating inequality \eqref{eq4} with respect to $t$ from $-T$ to 0, integrating inequality \eqref{eq5} with respect to $x$ over $B\cap\Omega$, and then adding the two resulting inequalities we get
\begin{equation}\label{eq6}
C(I+B)\ge \iint\limits_{E\cap D} |\nabla(u^{\lambda/2}\varphi)|^2\, dx\, dt
\end{equation}
where
\[
 I = \iint\limits_{E\cap D} (Hu)u^{\lambda-1}\varphi^2\, dx\, dt\quad \text{and}\quad B = \iint\limits_{E\cap D} u^\lambda\, dx\, dt.
\]
Multiplying \eqref{eq6} by
\[
 M := \max_{-T\le t\le 0} \left(~\int\limits_{B\cap\Omega} u^\lambda \varphi^2\, dx\right)^{2/n}
\]
and using the parabolic Sobolev inequality (see \cite[Theorem
6.9]{Lie}) 
we obtain
\begin{equation}\label{eq7}
 C(I+B)M \ge A := \iint\limits_{E\cap D} (u^\lambda\varphi^2)^{\frac{n+2}n}\, dx\, dt.
\end{equation}
Since 
\begin{align*}
 \frac\partial{\partial t} (u^\lambda\varphi^2) &= \lambda u^{\lambda-1}u_t\varphi^2 + 2u^\lambda\, \varphi \varphi_t\\
&= \lambda u^{\lambda-1} \varphi^2(\Delta u + Hu) + 2u^\lambda \varphi\varphi_t
\end{align*}
it follows from \eqref{eq4} that for $-T<t<0$ we have
\[
 \frac\partial{\partial t} \int\limits_{B\cap\Omega} u^\lambda \varphi^2\, dx \le C \int\limits_{B\cap\Omega} u^\lambda\, dx + \lambda \int\limits_{B\cap\Omega} u^{\lambda-1} \varphi^2 Hu\, dx
\]
and thus
\begin{equation}\label{eq8}
 M^{\frac{n}2} \le C(I+B).
\end{equation}
Substituting \eqref{eq8} in \eqref{eq7} we get
\[
A\le C(I+B)^{\frac{n+2}n}
\]
which implies \eqref{eq2}.
\end{proof}

\section{Proof of Theorem \ref{thm1.1}}\label{sec3}

\indent 

In this section we prove the following theorem which clearly implies
Theorem \ref{thm1.1}.

\begin{thm}\label{thm3.1}
Suppose $u$ is a $C^{2,1}$ nonnegative solution of
\begin{equation}\label{eq3.1}
 0 \le u_t-\Delta u \le b\left(u + \frac{1}{\sqrt{t}^n}\right)^{1+2/n}
\quad \text{in}\quad \Omega\times (0,T),
\end{equation}
where $T$ and $b$ are positive constants and $\Omega$ is an open
subset of ${\bb R}^n$, $n\ge 1$. Then, for each compact subset $K$ of
$\Omega$, we have
\begin{equation}\label{eq3.2}
 \max_{x\in K} u(x,t) = O(t^{-n/2}) \quad \text{as}\quad t\to 0^+.
\end{equation}
\end{thm}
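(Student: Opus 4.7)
The plan is to argue by contradiction. Suppose \eqref{eq3.2} fails for some compact $K\subset\Omega$; then there exist $x_j\in K$ and $t_j\in(0,T)$ with $t_j\to 0^+$, $x_j\to x_0\in K$, and
\[
M_j:=t_j^{n/2}u(x_j,t_j)\to\infty.
\]
The inequality \eqref{eq3.1} is invariant under the parabolic scaling $u(x,t)\mapsto\lambda^n u(x_0+\lambda x,\lambda^2 t)$, since the right-hand side $(u+t^{-n/2})^{1+2/n}$ has parabolic homogeneity $n+2$. Hence, after translation and rescaling, I may assume $x_0=0$ and $B_4(0)\times(0,3)\subset\Omega\times(0,T)$, so that Lemma~\ref{lem2.1} applies and gives the decomposition $u=N+v+h$ on $B_1(0)\times(0,1)$ together with $u,Hu\in L^1(B_2(0)\times(0,2))$. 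The potential $v$ satisfies $v(x,t)\le(4\pi t)^{-n/2}\mu(B_2(0))=O(t^{-n/2})$, and $h$ is uniformly bounded on a neighborhood of $(0,0)$ because it is a smooth heat solution on $B_1(0)\times(-1,1)$ vanishing on $t\le 0$. Passing to a subsequence, $t_j^{n/2}N(x_j,t_j)\to\infty$.

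Next I would rescale parabolically about $(x_j,t_j)$ at the natural scale $\sqrt{t_j}$ by
\[
u_j(y,s):=t_j^{n/2}\,u\bigl(x_j+\sqrt{t_j}\,y,\,t_j(1+s)\bigr).
\]
A direct computation shows $u_j\ge 0$ is $C^{2,1}$ on an expanding cylinder, $u_j(0,0)=M_j\to\infty$, and
\[
0\le Hu_j\le b\bigl(u_j+(1+s)^{-n/2}\bigr)^{1+2/n}\le C_0(u_j+1)^{1+2/n}
\quad\text{on}\quad B_{1/\sqrt{t_j}}(0)\times(-\tfrac12,0),
\]
with $C_0$ depending only on $b$ and $n$. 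Changing variables in the bound $u\in L^1(B_2(0)\times(0,2))$ from Lemma~\ref{lem2.1} gives, for each $R>0$,
\[
\iint_{B_R(0)\times(-1/2,0)}u_j(y,s)\,dy\,ds\le C(R)
\]
uniformly in $j$. Sharpening this via the pointwise $O(t^{-n/2})$ bound on $v$ together with a heat-kernel estimate on $N$ (using $Hu\in L^1$) places $u$ in $L^q(B_2(0)\times(0,2))$ for each $q<1+2/n$; the same change of variables transports this to a uniform bound
\[
\|u_j\|_{L^q(B_R(0)\times(-1/2,0))}\le C(q,R)
\]
for every such $q>1$.

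The final step is a Moser/Brezis iteration based on Lemma~\ref{lem2.3}. Applied to $u_j$ with suitable cut-offs on nested cylinders $E'\Subset E\ni(0,0)$, Lemma~\ref{lem2.3} together with the structural inequality for $Hu_j$ produces, for each $\lambda>1$,
\[
\Bigl(\iint_{E'}(u_j^\lambda\varphi^2)^{\frac{n+2}{n}}\,dy\,ds\Bigr)^{\frac{n}{n+2}}
\le C\Bigl(\iint_{E}u_j^{\lambda+\frac{2}{n}}\varphi^2\,dy\,ds+\iint_{E}u_j^\lambda\,dy\,ds+1\Bigr).
\]
Splitting $u_j^{\lambda+2/n}=u_j^\lambda\cdot u_j^{2/n}$ via Young's inequality with conjugate exponents $(n+2)/n$ and $(n+2)/2$ absorbs the critical term into the left-hand side and yields a reverse-H\"older estimate
\[
\|u_j\|_{L^{(n+2)q/n}(E')}\le C\bigl(\|u_j\|_{L^q(E)}+1\bigr)
\]
with $C$ independent of $j$ for each $q>1$. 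Iterating this finitely many times from the base $L^q$-bound above gives $u_j(0,0)\le\|u_j\|_{L^\infty(E'')}\le C'$ uniformly in $j$, contradicting $u_j(0,0)\to\infty$. The principal technical obstacle is closing the iteration with $j$-independent constants at the Fujita-critical exponent $1+2/n$: the Young absorption must deposit the borderline power $u_j^{\lambda+2/n}$ into the Sobolev-improved norm on the left without saturating it, and the rescaled perturbation $(1+s)^{-n/2}$ must be shown to contribute only harmless lower-order terms at every iteration level.
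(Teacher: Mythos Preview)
Your overall architecture---reduce to a fixed cylinder, apply Lemma~\ref{lem2.1}, rescale parabolically about a bad sequence $(x_j,t_j)$, and derive a contradiction through a Moser--Brezis iteration based on Lemma~\ref{lem2.3}---is exactly the paper's. The genuine gap is in the absorption step at the critical exponent. After Lemma~\ref{lem2.3} and the structural bound $Hu_j\le C_0(u_j+1)^{(n+2)/n}$ you must control
\[
A^{n/(n+2)}\le C\Bigl(\iint_{E} u_j^{\lambda+2/n}\varphi^2 + \iint_{E} u_j^\lambda + 1\Bigr),
\qquad A:=\iint_{E}(u_j^\lambda\varphi^2)^{(n+2)/n}.
\]
Your Young split $u_j^{\lambda+2/n}\varphi^2\le\varepsilon(u_j^\lambda\varphi^2)^{(n+2)/n}+C(\varepsilon)u_j^{(n+2)/n}$ produces $A^{n/(n+2)}\le C\varepsilon A+\text{(lower order)}$. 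But since $u_j(0,0)\to\infty$ there is no a priori bound on $A$, and for large $A$ one has $A^{n/(n+2)}\ll A$; the term $C\varepsilon A$ cannot be absorbed into the left side for any fixed $\varepsilon$. This is precisely the ``saturation'' you flag but do not resolve. The paper closes the loop with a different, and essential, observation: writing $f_j=t_j^{(n+2)/2}Hu$ and $v_j=u_j+C$, the pointwise identity
\[
\Bigl(\frac{Hv_j}{v_j}\Bigr)^{(n+2)/2}
=Hu_j\Bigl(\frac{Hu_j}{v_j^{(n+2)/n}}\Bigr)^{n/2}\le Hu_j=f_j
\]
together with $Hu\in L^1$ (Lemma~\ref{lem2.1}) gives $\iint_{E}(Hv_j/v_j)^{(n+2)/2}\le\|f_j\|_{L^1}\to 0$. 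One H\"older step then yields $\iint(Hv_j)v_j^{\lambda-1}\varphi^2\le o(1)\cdot A^{n/(n+2)}$, which \emph{is} absorbable for $j$ large and delivers the reverse-H\"older inequality with $j$-independent constants. Smallness of $\|f_j\|_{L^1}$, not a generic Young argument, is what makes the critical case go through.

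Two smaller points. First, the claim that the change of variables alone transports $u\in L^q$ to a uniform bound $\|u_j\|_{L^q}\le C$ is not justified: the rescaling contributes a factor $t_j^{(q-1)n/2-1}$, which diverges for $q<(n+2)/n$. The paper handles this by splitting $N$ into a near part over $E_{t_j}(x_j,t_j)$ (whose rescaled $L^q$ norm tends to $0$, again via $\|f_j\|_{L^1}\to 0$) and a far part that is pointwise $O(t_j^{-n/2})$ and hence rescales to a bounded function; this gives the pointwise estimate $t_j^{n/2}u\le u_j+C$ (the paper's \eqref{eq3.19}) rather than an $L^q$ transport. Second, finitely many Moser steps yield boundedness in every $L^p$, $p<\infty$, not an $L^\infty$ bound; the paper finishes by feeding a high-$L^p$ bound on $f_j$ back into the heat-kernel representation \eqref{eq3.15} to bound $u_j(0,0)$.
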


\begin{proof}
  To prove Theorem \ref{thm3.1}, we claim it suffices to prove Theorem
  \ref{thm3.1}$'$ where Theorem \ref{thm3.1}$'$ is the theorem
  obtained from Theorem \ref{thm3.1} by replacing \eqref{eq3.1} with
\begin{equation}\label{eq3.3}
 0 \le u_t - \Delta u \le\left(u + \frac{b}{\sqrt{t}^n}\right)^{1+2/n}
\quad \text{in}\quad B_4(0) \times (0,3)
\end{equation}
and replacing \eqref{eq3.2} with
\begin{equation}\label{eq3.4}
 \max_{|x|\le \frac12} u(x,t) = O(t^{-n/2})
\quad \text{as}\quad t\to 0^+.
\end{equation}
To see this, let $u$ be as in Theorem \ref{thm3.1} and let $K$ be a
compact subset of $\Omega$. Since $K$ is compact there exist finite
sequences $\{r_j\}^N_{j=1} \subset (0,\sqrt T/4)$ and $\{x_j\}^N_{j=1}
\subset K$ such that
\[
 K \subset \bigcup^N_{j=1} B_{r_j/2} (x_j) 
\subset \bigcup^N_{j=1} B_{4r_j}(x_j) \subset\Omega.
\]
Let $v_j(y,s) = r_j^nb^{n/2}u(x,t)$, where $x=x_j+r_jy$ and $t=r^2_js$. Then
\[
 0 \le Hv_j \le\left(v_j + \frac{b^{n/2}}{\sqrt{s}^n}\right)^{1+2/n}
\quad \text{for}\quad |y|<4, \quad 0<s<16,
\]
where $Hv_j:=\frac{\partial v_j}{\partial s} - \Delta_yv_j$.
Hence by Theorem \ref{thm3.1}$'$ there exist $s_j\in (0,16)$ and 
$C_j>0$ such that
\[
 \max_{|y|\le\frac12} v_j(y,s) \le C_js^{-n/2}\quad \text{for}\quad 0<s<s_j.
\]
That is
\[
 \max_{|x-x_j|\le r_j/2} u(x,t) \le C_jb^{-n/2} t^{-n/2}
\quad \text{for}\quad 0<t<t_j := r^2_js_j.
\]
So for $0<t<\min\limits_{1\le j\le N} t_j$ we have
\begin{align*}
 \max_{x\in K} u(x,t) &\le \max_{1\le j\le N} \max_{|x-x_j|\le r_j/2} u(x,t)\\
&\le (\max_{1\le j\le N} C_j) b^{-n/2} t^{-n/2}.
\end{align*}
That is, \eqref{eq3.2} holds.

We now complete the proof of Theorem \ref{thm3.1} by proving Theorem
\ref{thm3.1}$'$. Suppose $u$ is a $C^{2,1}$ nonnegative solution of
\eqref{eq3.3}. By Lemma \ref{lem2.1},
\begin{align}\label{eq3.5}
&u,Hu\in L^1(B_2(0) \times (0,2))\\
\intertext{and}
\label{eq3.6}
&u = N+v+h\quad \text{in}\quad B_1(0) \times (0,1)
\end{align}
where $N,v$, and $h$ are as in Lemma \ref{lem2.1}.

Suppose for contradiction that \eqref{eq3.4} does not hold. Then there
exists a sequence $\{(x_j,t_j)\} \subset \ovl{B_{1/2}(0)} \times
(0,1/4)$ such that for some $x_0\in \ovl{B_{1/2}(0)}$ we have
$(x_j,t_j)\to (x_0,0)$ as $j\to\infty$ and
\begin{equation}\label{eq3.7}
 \lim_{j\to\infty} t^{n/2}_j u(x_j,t_j) = \infty.
\end{equation}
Clearly
\begin{equation}\label{eq3.8}
(4\pi t)^{n/2} v(x,t) \le \intl_{|y|<2} d\mu(y)<\infty
\quad \text{for}\quad (x,t)\in {\bb R}^n \times (0,\infty).
\end{equation}
For $(x,t)\in {\bb R}^n\times {\bb R}$ and $r>0$, let
\[
 E_r(x,t) := \{(y,s)\in {\bb R}^n\times {\bb R}\colon \ |y-x| < \sqrt
 r
\quad \text{and}\quad t-r<s<t\}.
\]

In what follows, the variables $(x,t)$ and $(\xi,\tau)$ are related by
\begin{equation}\label{eq3.9}
 x=x_j + \sqrt{t_j}\, \xi\quad \text{and}\quad t=t_j+t_j\tau
\end{equation}
and the variables $(y,s)$ and $(\eta,\zeta)$ are related by
\begin{equation}\label{eq3.10}
y = x_j + \sqrt{t_j}\, \eta\quad \text{and}\quad s=t_j+t_j\zeta.
\end{equation}
For each positive integer $j$, define
\begin{equation}\label{eq3.11}
 f_j(\eta,\zeta) = \sqrt{t_j}{}^{n+2} Hu(y,s)\quad \text{for} \quad (y,s)\in E_{t_j}(x_j,t_j)
\end{equation}
and define
\begin{equation}\label{eq3.12}
 u_j(\xi,\tau) = \sqrt{t_j}{}^n \iint\limits_{E_{t_j}(x_j,t_j)} \Phi(x-y,t-s) Hu(y,s)\, dy\,ds\quad \text{for}\quad (x,t)\in {\bb R}^n\times (0,\infty).
\end{equation}
By \eqref{eq3.5} we have
\begin{equation}\label{eq3.13}
 \iint\limits_{E_{t_j}(x_j,t_j)} Hu(y,s)\, dy\, ds\to 0\quad \text{as}\quad j\to\infty
\end{equation}
and thus making the change of variables \eqref{eq3.10} in \eqref{eq3.13} and using \eqref{eq3.11} we get
\begin{equation}\label{eq3.14}
 \iint\limits_{E_1(0,0)} f_j(\eta,\zeta) \, d\eta\, d\zeta \to 0\quad \text{as}\quad j\to\infty.
\end{equation}
Since
\[
 \Phi(x-y,t-s) = \frac{1}{\sqrt{t_j}{}^n} \Phi(\xi-\eta,\tau-\zeta)
\]
it follows from \eqref{eq3.12} and \eqref{eq3.11} that
\begin{equation}\label{eq3.15}
 u_j(\xi,\tau) = \iint\limits_{E_1(0,0)} \Phi(\xi-\eta,\tau-\zeta) f_j(\eta,\zeta) \, d\eta\, d\zeta.
\end{equation}
It is easy to check that for $1<q<\frac{n+2}n$ and $(\xi,\tau)\in {\bb R}^n \times (-1,0]$ we have
\begin{equation}\label{eq3.16}
 \left(~\iint\limits_{{\bb R}^n\times (-1,0)} \Phi(\xi-\eta, \tau-\zeta)^q\, d\eta\, d\zeta\right)^{1/q} < C(n,q) < \infty.
\end{equation}
Thus for $1 < q < \frac{n+2}n$ we have by \eqref{eq3.15} and standard $L^p$ estimates for the convolution of two functions that
\begin{equation}\label{eq3.17}
 \|u_j\|_{L^q(E_1(0,0))} \le C(n,q) \|f_j\|_{L^1(E_1(0,0))}\to 0\quad \text{as}\quad j\to \infty
\end{equation}
by \eqref{eq3.14}.
If 
\begin{equation}\label{eq3.18}
(x,t)\in \ovl{E_{t_j/4}(x_j,t_j)} \quad \text{and}\quad (y,s) \in {\bb R}^n\times (0,\infty) - E_{t_j}(x_j,t_j)
\end{equation}
then
\[
\Phi(x-y,t-s) \le \max_{0\le\tau<\infty} \Phi\left(\frac{\sqrt{t_j}}2,\tau\right) \le \frac{C(n)}{\sqrt{t_j}{}^n}.
\]
Thus for $(x,t)\in \ovl{E_{t_j/4}(x_j,t_j)}$ we have
\[
 \iint\limits_{B_2(0)\times (0,2)-E_j(x_j,t_j)} \Phi(x-y,t-s) Hu(y,s)\, dy\, ds\le \frac{C(n)}{\sqrt{t_j}{}^n} \iint\limits_{B_2(0)\times (0,2)} Hu(y,s)\, dy\, ds.
\]
It follows therefore from \eqref{eq3.6}, \eqref{eq3.8}, \eqref{eq3.5}, 
and \eqref{eq3.12} that
\begin{equation}\label{eq3.19}
u(x,t) \le \frac{u_j(\xi,\tau)+C}{\sqrt{t_j}{}^n}\quad \text{for}\quad (x,t) \in \ovl{E_{t_j/4}(x_j,t_j)}
\end{equation}
where $C$ is a positive constant which does not depend on $j$ or $(x,t)$.

Substituting $(x,t) = (x_j,t_j)$ in \eqref{eq3.19} and using \eqref{eq3.7} we obtain
\begin{equation}\label{eq3.20}
u_j(0,0)\to \infty\quad \text{as}\quad j\to \infty.
\end{equation}
For $(\xi,\tau)\in E_1(0,0)$ we have by \eqref{eq3.12} that
\[
Hu_j(\xi,\tau) = \sqrt{t_j}{}^{n+2} Hu(x,t).
\]
Hence for $(\xi,\tau) \in E_1(0,0)$ we have by \eqref{eq3.11} that
\begin{equation}\label{eq3.21}
Hu_j(\xi,\tau) = f_j(\xi,\tau)
\end{equation}
and for $(\xi,\tau)\in E_{1/4}(0,0)$ we have by \eqref{eq3.3} and \eqref{eq3.19} that
\begin{align}
 Hu_j(\xi,\tau) &\le \sqrt{t_j}{}^{n+2} \left(u(x,t) + \sqrt{\frac43}^{\, n} b \frac{1}{\sqrt{t_j}{}^n}\right)^{\frac{n+2}n}\notag\\
&\le \sqrt{t_j}{}^{n+2} \left(\frac{u_j(\xi,\tau) + C}{\sqrt{t_j}{}^n}\right)^{\frac{n+2}n}\notag\\
&= (u_j(\xi,\tau) + C)^{\frac{n+2}n}\notag\\
\label{eq3.22}
&=: v_j(\xi,\tau) ^{\frac{n+2}n}
\end{align}
where the last equation is our definition of $v_j$. Thus
\begin{equation}\label{eq3.23}
v_j(\xi,\tau) = u_j(\xi,\tau) + C\quad \text{for}\quad (\xi,\tau)\in E_{1/4}(0,0)
\end{equation}
where $C$ is a positive constant which does not depend on $(\xi,\tau)$ or $j$. Hence in $E_{1/4}(0,0)$ we have $Hu_j = Hv_j$ and
\[
 \left(\frac{Hv_j}{v_j}\right)^{\frac{n+2}2} = Hu_j\left(\frac{Hu_j}{v^{\frac{n+2}n}_j}\right)^{n/2} < Hu_j = f_j
\]
by \eqref{eq3.22} and \eqref{eq3.21}. Thus
\begin{equation}\label{eq3.24}
\iint\limits_{E_{1/4}(0,0)} \left(\frac{Hv_j}{v_j}\right)^{\frac{n+2}2}\, d\eta\, d\zeta\to 0 \quad \text{as}\quad j\to\infty
\end{equation}
by \eqref{eq3.14}.

Let $0 < R < 1/8$ and $\lambda>1$ be constants and let $\varphi\in C^\infty_0(B_{\sqrt{2R}}(0)\times (-2R,\infty))$ satisfy $\varphi\equiv 1$ on $E_R(0,0)$ and $\varphi\ge 0$ on ${\bb R}^n\times {\bb R}$. Then
\begin{align*}
 \iint\limits_{E_{2R}(0,0)} (Hv_j)v^{\lambda-1}_j \varphi^2\, d\xi\, d\tau &= \iint\limits_{E_{2R}(0,0)} \frac{Hv_j}{v_j} v^\lambda_j  \varphi^2\, d\xi\, d\tau\\
&\le \left(~\iint\limits_{E_{2R}(0,0)} \left(\frac{Hv_j}{v_j}\right)^{\frac{n+2}2} \, d\xi\, d\tau\right)^{\frac{2}{n+2}} \left(~\iint\limits_{E_{2R}(0,0)} (v^\lambda_j \varphi^2)^{\frac{n+2}n}\, d\xi\, d\tau\right)^{\frac{n}{n+2}}.
\end{align*}
Hence, using \eqref{eq3.24} and applying Lemma \ref{lem2.3} with $T=2R$, $B = \Omega = B_{\sqrt{2R}}(0)$, $E=E_{2R}(0,0)$, and $u=v_j$ we have
\[
 \iint\limits_{E_{2R}(0,0)} (v^\lambda_j \varphi^2)^{\frac{n+2}n} \, d\xi\, d\tau \le C\left(~\iint\limits_{E_{2R}(0,0)} v^\lambda_j \, d\xi\, d\tau\right)^{\frac{n+2}n}
\]
where $C$ does not depend on $j$. Therefore
\begin{equation}\label{eq3.25}
 \iint\limits_{E_R(0,0)} v^{\lambda\frac{n+2}n}_j \, d\xi\, d\tau \le C\left(~\iint\limits_{E_{2R}(0,0)} v^\lambda_j\, d\xi\, d\tau\right)^{\frac{n+2}n}.
\end{equation}
Starting with \eqref{eq3.17} with $q = \frac{n+1}n$ and applying \eqref{eq3.25} a finite number of times we find for each $p>1$ there exists $\vp>0$ such that the sequence $v_j$ is bounded in $L^p(E_\vp(0,0))$ and thus the same is true for the sequence $f_j$ by \eqref{eq3.22} and \eqref{eq3.21}. Thus by \eqref{eq3.16} and H\"older's inequality we have 
\begin{equation}\label{eq3.26}
 \limsup_{j\to\infty} \iint\limits_{E_\vp(0,0)} \Phi(-\eta,-\zeta) f_j(\eta,\zeta) \, d\eta\, d\zeta  < \infty
\end{equation}
for some $\vp>0$. Also by \eqref{eq3.14}
\begin{equation}\label{eq3.27}
\lim_{j\to\infty} \iint\limits_{E_1(0,0)-E_\vp(0,0)} \Phi(-\eta,-\zeta) f_j(\eta,\zeta)\, d\eta\, d\zeta = 0.
\end{equation}
Adding \eqref{eq3.26} and \eqref{eq3.27}, and using \eqref{eq3.15}, we contradict \eqref{eq3.20}.
\end{proof}

\section{Proof of Theorem \ref{thm1.3}}\label{sec4}

\indent 

In this section we prove the following theorem which clearly implies
Theorem \ref{thm1.3}.

\begin{thm}\label{thm4.1}
Suppose $u\in C^{2,1}(\ovl\Omega\times (0,2T))$ is a nonnegative solution of
\begin{equation}\label{eq4.1}
 \begin{cases}
   0\le u_t-\Delta u\le b\left(u+\frac{1}{\sqrt{t}^{n+1}}\right)^p&
\text{{\rm in} $\Omega\times (0,2T)$}\\
   u\le b&\text{{\rm on} $\partial\Omega\times (0,2T)$}
 \end{cases}
\end{equation}
where $T$ and $b$ are positive constants, $1<p<1+2/(n+1)$,
and $\Omega$ is a $C^2$ bounded
domain in ${\bb R}^n$, $n\ge 1$. Then there exists a positive constant
$C$ such that
\begin{equation}\label{eq4.2}
u(x,t)\le  C\frac{ \frac{\rho(x)}{\sqrt{t}} \wedge 1}{\sqrt{t}^{n+1}}
+ \sup_{\partial\Omega\times(0,T)} u
\qquad\text{for all }(x,t)\in\Omega\times(0,T),
\end{equation}
where $\rho(x)={\rm dist}(x,\partial\Omega)$.

\end{thm}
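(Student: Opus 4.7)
The plan is to mirror the proof of Theorem \ref{thm3.1}, replacing Lemma \ref{lem2.1} by Lemma \ref{lem2.2} and the Gaussian $\Phi$ by the Dirichlet heat kernel $G$. Suppose for contradiction that \eqref{eq4.2} fails. Writing
\[
M(x,t):=\frac{\rho(x)/\sqrt{t}\wedge 1}{\sqrt{t}^{\,n+1}},
\]
there exists a sequence $(x_j,t_j)\in\Omega\times(0,T)$ with $t_j\to 0^+$ and $u(x_j,t_j)/M(x_j,t_j)\to\infty$. Lemma \ref{lem2.2} decomposes $u=N+R$ where
\[
N(x,t):=\int_0^t\!\int_\Omega G(x,y,t-s)Hu(y,s)\,dy\,ds
\]
and $0\le R\le CM+b$, so the blow-up must be carried by $N$: $N(x_j,t_j)/M(x_j,t_j)\to\infty$.

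Set $\rho_j:=\rho(x_j)$, $r_j:=\rho_j\wedge\sqrt{t_j}$, and rescale
\[
x=x_j+r_j\xi,\qquad t=t_j+r_j^2\tau,\qquad u_j(\xi,\tau):=u(x,t)/M(x_j,t_j),
\]
so $u_j(0,0)\to\infty$. A direct computation (separating the interior case $\rho_j\ge\sqrt{t_j}$ from the near-boundary case $\rho_j<\sqrt{t_j}$) shows $0\le Hu_j\le\varepsilon_j(u_j+1)^p$ on a fixed parabolic cylinder about $(0,0)$, with $\varepsilon_j\to 0$ precisely because $p<1+2/(n+1)$ forces the time exponent $n+3-(n+1)p$ to be strictly positive; the term $1/\sqrt{t}^{\,n+1}$ built into \eqref{eq4.1} is essential for the scaled inhomogeneity to survive in the near-boundary case. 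In that case the rescaled domain $(\Omega-x_j)/r_j$ carries a Dirichlet boundary piece at unit distance from the origin, on which $u_j\le b/M(x_j,t_j)\to 0$.

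Rescaling the integral representation from Lemma \ref{lem2.2} and using Hui's bound \eqref{eq2.27} to dominate $G$ by a constant multiple of the Gaussian (the boundary decay factors $\rho/\sqrt{t-s}\wedge 1$ being $\le 1$), standard $L^q$ convolution estimates yield
\[
\|u_j\|_{L^q(E)}\le C(n,q)\|Hu_j\|_{L^1(E)}+o(1)
\]
for each $q\in(1,(n+2)/n)$ on a small parabolic cylinder $E$ about $(0,0)$; here $\|Hu_j\|_{L^1(E)}\to 0$ because $\rho Hu\in L^1(\Omega\times(0,T))$ by \eqref{eq2.19} combined with the particular form of the rescaling, which in the near-boundary case sees $\rho\sim\rho_j=r_j$ on the support of the pulled-back $Hu_j$. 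With this seed in hand, iterate Lemma \ref{lem2.3}: setting $v_j:=u_j+1$ and using $Hv_j/v_j\le\varepsilon_j v_j^{p-1}$, one bounds $\iint_E(Hv_j/v_j)^{(n+2)/2}$ by $\varepsilon_j^{(n+2)/2}\iint_E v_j^{(p-1)(n+2)/2}$; combined with \eqref{eq2} and H\"older this upgrades, as in Theorem \ref{thm3.1}, an $L^\lambda$ bound on $u_j$ to an $L^{\lambda(n+2)/n}$ bound on a slightly smaller cylinder, with constants independent of $j$. After finitely many rounds $\|u_j\|_{L^r(E')}$ is bounded for arbitrarily large $r$, and a final convolution estimate then controls $u_j(0,0)$ uniformly, contradicting $u_j(0,0)\to\infty$.

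The principal obstacle is handling the near-boundary case $\rho_j<\sqrt{t_j}$ on the same footing as the interior case. One must (i) choose $r_j=\rho_j$ rather than $\sqrt{t_j}$, (ii) verify that Hui's boundary decay factors in \eqref{eq2.27} cooperate with this scaling so that the same $L^q$ convolution bounds persist, and (iii) apply Lemma \ref{lem2.3}---which was written precisely to allow $B\cap\partial\Omega\ne\emptyset$---with constants uniform in $j$ on the rescaled domain, whose boundary sits at unit distance from the origin.
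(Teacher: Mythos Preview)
Your overall architecture --- contradiction, parabolic rescaling about $(x_j,t_j)$, a seed $L^q$ estimate from the representation in Lemma~\ref{lem2.2}, and Moser iteration via Lemma~\ref{lem2.3} --- matches the paper's. The difficulty you correctly single out, the near-boundary case $\rho_j<\sqrt{t_j}$, is exactly where your proposal breaks.

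Your choice $r_j=\rho_j$ there does not deliver $\|Hu_j\|_{L^1(E)}\to 0$. With $u_j=u/M_j$, $M_j=\rho_j/\sqrt{t_j}^{\,n+2}$, a change of variables gives
\[
\iint_E Hu_j\,d\xi\,d\tau=\frac{1}{M_j\,r_j^{\,n}}\iint_{E_{r_j^2}(x_j,t_j)} Hu\,dy\,ds=\frac{\sqrt{t_j}^{\,n+2}}{\rho_j^{\,n+1}}\iint Hu.
\]
Even granting $\rho\sim\rho_j$ on the support, so that $\iint Hu\lesssim\rho_j^{-1}\iint\rho\,Hu=\rho_j^{-1}\,o(1)$, you get only $(\sqrt{t_j}/\rho_j)^{n+2}\cdot o(1)$, and nothing prevents $(\sqrt{t_j}/\rho_j)^{n+2}$ from blowing up (e.g.\ $\rho_j=t_j$). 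Keeping one boundary factor from Hui's bound rather than discarding both produces the same diverging prefactor. The seed estimate fails and the iteration never starts. A secondary issue: your $u_j=u/M_j$ is only $\le b/M_j$ on the rescaled boundary, not zero, so Lemma~\ref{lem2.3} does not apply to it as stated.

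The paper sidesteps both problems by scaling with $\sqrt{t_j}$ in \emph{all} cases and by taking $u_j$ to be $\sqrt{t_j}^{\,n+1}$ times the \emph{localized} Dirichlet-kernel integral (so $u_j=0$ on $\partial\Omega_j$ exactly). The point is to keep the factor $\rho(y)/\sqrt{t-s}$ from \eqref{eq2.27} inside the convolution: after rescaling it pairs $f_j$ with the weight $\rho_j(\eta)=\rho(y)/\sqrt{t_j}$, and one obtains $\|f_j\rho_j\|_{L^1}\to 0$ directly from $\rho\,Hu\in L^1$, with no stray prefactor. The price is that the effective kernel becomes $(\tau-\zeta)^{-1/2}\widehat G(|\xi-\eta|,\tau-\zeta)$, whose $L^q$ norm is finite only for $q<\frac{n+2}{n+1}$ (not $\frac{n+2}{n}$). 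The contradiction is then obtained for the ratio $u_j(0,0)/(\rho_j(0)\wedge 1)$ rather than $u_j(0,0)$ alone, using the remaining boundary factor $\rho(x)/\sqrt{t-s}\wedge 1$ at the final step.
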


\begin{proof}

Suppose for contradiction that \eqref{eq4.2} does not hold. Then there
exists a sequence $\{(x_j,t_j)\} \subset \Omega\times (0,T)$ such that
$t_j\to 0$ as $j\to\infty$ and
\begin{equation}\label{eq4.3}
\frac{u(x_j,t_j) - \sup_{\partial\Omega\times(0,T)} u}
{\left(\frac{\rho(x_j)}{\sqrt{t_j}} \wedge 1\right)/\sqrt{t_j}^{n+1}}
\to\infty\qquad\text{as}\quad j\to\infty.
\end{equation}
For $(x,t) \in {\bb
  R}^n\times {\bb R}$ and $r>0$, let
\[
E_r(x,t) = \{(y,s) \in {\bb R}^n\times {\bb R}\colon \ |y-x| < \sqrt
r\quad \text{and}\quad t-r<s<t\}.
\]
In what follows the variables $(x,t)$ and $(\xi,\tau)$ are related by
\begin{equation}\label{eq4.4}
 x=x_j+\sqrt{t_j}\xi\quad \text{and}\quad t=t_j+t_j\tau
\end{equation}
and the variables $(y,s)$ and $(\eta,\zeta)$ are related by
\begin{equation}\label{eq4.5}
 y=x_j+\sqrt{t_j}\eta\quad \text{and}\quad s=t_j+t_j\zeta.
\end{equation}
For each
positive integer $j$, define 
\begin{equation}\label{eq4.6}
 \rho_j(\eta)=\frac{\rho(y)}{\sqrt{t_j}} \quad \text{and} \quad
f_j(\eta,\zeta) = \sqrt{t_j}^{n+3} Hu(y,s) \quad \text{for } 
(y,s)\in \ovl\Omega\times(0,2T)
\end{equation}
and define
\begin{equation}\label{eq4.7}
u_j(\xi,\tau)=\sqrt{t_j}^{n+1}
  \iint\limits_{E_{t_j}(x_j,t_j)\cap(\Omega\times(0,T))} 
G(x,y,t-s) Hu(y,s) \,dy\,ds \quad \text{for } 
(x,t)\in \ovl\Omega\times(0,2T)
\end{equation}
where $Hu$ and $G$ are as in Lemma \ref{lem2.2} and we define 
$G(x,y,\tau)=0$ if $\tau\le 0$.

By \eqref{eq2.19} we have
\begin{equation}\label{eq4.8}
  \iint\limits_{E_{t_j}(x_j,t_j)\cap(\Omega\times(0,T))} 
\rho(y) Hu(y,s)\,dy\,ds\to 0
\quad \text{as}\quad j\to\infty,
\end{equation}
and thus making the
change of variables \eqref{eq4.5} in 
\eqref{eq4.8} we get
\begin{equation}\label{eq4.9}
 \iint\limits_{E_1(0,0)\cap D_j} f_j(\eta,\zeta)\rho_j(\eta) 
\,d\eta\,d\zeta\to 0\quad \text{as}\quad j\to\infty,
\end{equation}
where $D_j=\Omega_j\times(-1,0)$ and  
$\Omega_j=\{\eta:y\in\Omega\}$. 

Since, by \eqref{eq2.27} and \eqref{eq4.6},  
\begin{align*}
G(x,y,t-s) &\le \left(\frac{\rho(x)}{\sqrt{t-s}}\wedge 1\right)
\left(\frac{\rho(y)}{\sqrt{t-s}}\wedge 1\right)
\widehat G(|x-y|,t-s)\\
&=\left(\frac{\rho_j(\xi)}{\sqrt{\tau-\zeta}}\wedge 1\right)
\left(\frac{\rho_j(\eta)}{\sqrt{\tau-\zeta}}\wedge 1\right)
\frac{1}{\sqrt{t_j}^n}
\widehat G(|\xi-\eta|,\tau-\zeta),
\end{align*}
it follows from \eqref{eq4.7} and \eqref{eq4.6} that 
for $(\xi,\tau)\in\Omega_j\times(-1,0]$ we have
\begin{equation}\label{eq4.10}
u_j(\xi,\tau)\le \iint\limits_{E_1(0,0)\cap D_j}
\left(\frac{\rho_j(\xi)}{\sqrt{\tau-\zeta}}\wedge 1\right) 
\left(\frac{\rho_j(\eta)}{\sqrt{\tau-\zeta}}\wedge 1\right)
\widehat G(|\xi-\eta|, \tau-\zeta) f_j(\eta,\zeta) 
\,d\eta\,d\zeta
\end{equation}
where we define $\widehat G(r,\tau) = 0$ if $\tau\le
0$.
It is easy to check that for $1 < q < \frac{n+2}{n+1}$ and $(\xi,\tau)
\in {\bb R}^n\times (-1,0]$ we have
\begin{equation}\label{eq4.11}
\left(~\iint\limits_{{\bb R}^n\times (-1,0)}
  \left(\frac1{\sqrt{\tau-\zeta}} 
\widehat G (|\xi-\eta|, \tau-\zeta)\right)^q d\eta\,
d\zeta\right)^{\frac1q} < C(n,q,\Omega,T) < \infty.
\end{equation} 
Thus, for $1 < q < \frac{n+2}{n+1}$, we have by \eqref{eq4.10} and
standard $L^p$
estimates for the
convolution of two functions that
\begin{equation}\label{4.12}
\|u_j\|_{L^q(E_1(0,0)\cap D_j)} \le C(n,q,\Omega,T)\|f_j\rho_j\|_{L^1(E_1(0,0)\cap
  D_j)} \to 0 \quad
\text{as}\quad j\to \infty
\end{equation}
by \eqref{eq4.9}.

If
\begin{equation}\label{eq4.13}
  (x,t) \in \ovl{E_{t_j/4}(x_j,t_j)}\cap(\Omega\times(0,T))\quad \text{and}
\quad (y,s)\in \Omega \times (0,t) - E_{t_j}(x_j,t_j)
\end{equation}
then 
\begin{equation}\label{eq4.14}
 |x-y| \ge \sqrt{t_j}/2
\end{equation}
and hence by \eqref{eq2.27} we have
\begin{align*}
  G(x,y,t-s) &\le \left(\frac{\rho(x)}{\sqrt{t-s}}\wedge 1\right)
\frac{\rho(y)}{\sqrt{t-s}} 
\widehat G\left(\frac{\sqrt{t_j}}2, t-s\right)\\
  &\le \rho(y) \max_{0<\tau<\infty} 
\left(\frac{\rho(x)}{\sqrt{\tau}}\wedge 1\right)
\frac1{\sqrt\tau} \widehat G
  \left(\frac{\sqrt{t_j}}2, \tau\right) 
= \frac{C(n,\Omega,T)\rho(y)}{\sqrt{t_j}^{n+1}}
\left(\frac{\rho(x)}{\sqrt{t_j}}\wedge 1\right).
\end{align*}
Thus for $(x,t) \in \ovl{E_{t_j/4}(x_j,t_j)}\cap(\Omega\times(0,T))$ we have
\[
\iint\limits_{\Omega\times(0,t)- E_{t_j}(x_j,t_j)} G(x,y,t-s)
Hu(y,s)\,dy\,ds \le \frac{C(n,\Omega,T)}{\sqrt{t_j}^{n+1}} 
\left(\frac{\rho(x)}{\sqrt{t_j}}\wedge 1\right)
\int\limits_{\Omega\times(0,T)}
\rho(y) Hu(y,s)\,dy\,ds.
\]
It follows therefore from Lemma \ref{lem2.2} and \eqref{eq4.7} that 
\begin{equation}\label{eq4.15}
u(x,t)\le  \frac{
u_j(\xi,\tau) + C
\left(
\frac{\rho(x)}{\sqrt{t_j}} 
\wedge 1 \right)  }
{\sqrt{t_j}^{n+1}}
+ \sup_{\partial\Omega\times(0,T)} u
\qquad
\text{for } (x,t)\in
\ovl{E_{t_j/4}(x_j,t_j)}\cap(\Omega\times(0,T))
\end{equation}
where $C$ is
a positive constant which does not depend on $j$ or $(x,t)$.

Substituting $(x,t) = (x_j,t_j)$ in \eqref{eq4.15} and using
\eqref{eq4.3} we obtain
\begin{equation}\label{eq4.16}
\frac{u_j(0,0)}{\rho_j(0)\wedge 1} \ge
\frac{u(x_j,t_j) - \sup_{\partial\Omega\times(0,T)}u}
{\left(\frac{\rho(x_j)}{\sqrt{t_j}}\wedge 1\right)/\sqrt{t_j}^{n+1}}
-C \to \infty \quad \text{as} \quad j\to \infty.
\end{equation}

For $(\xi,\tau)\in E_1(0,0)\cap D_j$ we have by \eqref{eq4.7} that
\begin{equation}\label{eq4.17}
(Hu_j)(\xi,\tau)=\sqrt{t_j}^{n+3}(Hu)(x,t).
\end{equation}
Hence for $(\xi,\tau)\in E_1(0,0)\cap D_j$ we have by \eqref{eq4.6}
that
\begin{equation}\label{eq4.18}
(Hu_j)(\xi,\tau)=f_j(\xi,\tau)
\end{equation} 
and for $(\xi,\tau)\in E_{1/4}(0,0)\cap D_j$ we have by \eqref{eq4.1}
and \eqref{eq4.15} that 
\begin{align}
Hu_j(\xi,\tau)&\le\sqrt{t_j}^{n+3}b\left(u(x,t)+\sqrt{\frac{4}{3}}^{n+1}
\frac1{\sqrt{t_j}^{n+1}}\right)^p\nonumber\\
&\le \sqrt{t_j}^{n+3}b\left(\frac{u_j(\xi,\tau)+C}
{\sqrt{t_j}^{n+1}}\right)^p\nonumber\\
&=\sqrt{t_j}^\alpha b(u_j(\xi,\tau)+C)^p
\quad \text{where } \alpha=(n+1)\left(\frac{n+3}{n+1} - p\right)>0\nonumber\\
&=:\sqrt{t_j}^\alpha bv_j(\xi,\tau)^p\label{eq4.19},
\end{align} 
where the last equation is our definition of $v_j$. Thus
\begin{equation}\label{eq4.20}
v_j(\xi,\tau)=u_j(\xi,\tau) + C
\end{equation}
where $C$ is a positive constant which does not depend on $(\xi,\tau)$  or
$j$. Hence in $E_{1/4}(0,0)\cap D_j$ we have
\[
\left(\frac{Hu_j}{v_j}\right)^{\frac{n+2}{2}} 
\le (\sqrt{t_j}^\alpha bv_j^{p-1})^{\frac{n+2}{2}}
\le \sqrt{t_j}^{\alpha(n+2)/2} b^{\frac{n+2}{2}}v_j^q,
\]
where $q=(p-1)\frac{n+2}{2} < \frac2{n+1}\frac{n+2}2 =
\frac{n+2}{n+1}$. Thus
\begin{equation}\label{eq4.21}
\iint\limits_{E_{1/4}(0,0)\cap D_j}
\left(\frac{Hu_j}{v_j}\right)^{\frac{n+2}{2}}\,d\eta\,d\zeta
\le \sqrt{t_j}^{\alpha(n+2)/2} b^{\frac{n+2}{2}}
\|v_j\|_{L^q(E_1(0,0)\cap D_j)}^q \to 0 \quad\text{as } j\to\infty 
\end{equation} 
by \eqref{4.12}.

Let $0<R<1/8$ and $\lambda>1$ be constants and let 
$\varphi\in C^\infty_0(B_{\sqrt{2R}}(0,0)\times(-2R,\infty))$ satisfy 
$\varphi\equiv 1$ on $E_R(0,0)$ and 
$\varphi\ge 0$ on ${\bb R}^n\times{\bb R}$. Then
using \eqref{eq4.20} we have
\[
v_j^\lambda \varphi^2=(u_j+C)^\lambda\varphi^2
\le 2^\lambda (u_j^\lambda\varphi^2 +C^\lambda\varphi^2)
\quad\text{in } E_{1/4}(0,0)\cap D_j
\]
and hence
\begin{align}
&\iint\limits_{E_{2R}(0,0)\cap D_j} (Hu_j) u^{\lambda-1}_j \varphi^2\,
d\xi\,d\tau 
\le \iint\limits_{E_{2R}(0,0)\cap D_j} (Hu_j) v^{\lambda-1}_j \varphi^2\,
d\xi\,d\tau\nonumber\\ 
&= \iint\limits_{E_{2R}(0,0)\cap D_j} \frac{Hu_j}{v_j} v^\lambda_j\varphi^2\,d\xi\,d\tau\nonumber\\
&\le \left(\iint\limits_{E_{2R}(0,0)\cap D_j} \left(\frac{Hu_j}{v_j}\right)^{\frac{n+2}2} 
d\xi\,d\tau\right)^{\frac2{n+2}} \left(\iint\limits_{E_{2R}(0,0)\cap D_j} (v^\lambda_j 
\varphi^2)^{\frac{n+2}n} d\xi\,d\tau\right)^{\frac{n}{n+2}}\nonumber\\
\label{eq4.22}
&\le C\left(\iint\limits_{E_{2R}(0,0)\cap D_j} \left(\frac{Hu_j}{v_j}\right)^{\frac{n+2}2} 
d\xi\,d\tau\right)^{\frac2{n+2}} \left[\left(\iint\limits_{E_{2R}(0,0)\cap D_j} (u^\lambda_j 
\varphi^2)^{\frac{n+2}n} d\xi\,d\tau\right)^{\frac{n}{n+2}} + 1\right]
\end{align}
where $C$ is a positive constant which does not depend on $j$ and
whose value may change from line to line.
Thus using \eqref{eq4.21} and applying Lemma \ref{lem2.3} with $T=2R$, 
$B=B_{\sqrt{2R}}(0)$, $E=E_{2R}(0,0)$, $\Omega=\Omega_j$, and 
$u=u_j$, we have 
\[
\left(\iint\limits_{E_{2R}(0,0)\cap D_j} (u^\lambda_j \varphi^2)^{\frac{n+2}{n}} 
d\xi\,d\tau\right)^{\frac{n}{n+2}} \le C
\left(\iint\limits_{E_{2R}(0,0)\cap D_j} 
u^\lambda_j\, d\xi\,d\tau + 1\right),
\]
Consequently, 
\begin{equation}\label{eq4.23}
\iint\limits_{E_R(0,0)\cap D_j} u^{\lambda\frac{n+2}n}_j \, d\xi\,d\tau \le 
C\left(\iint\limits_{E_{2R}(0,0)\cap D_j} 
u^\lambda_j\, d\xi\,d\tau + 1\right)^{\frac{n+2}{n}} 
\end{equation}
By \eqref{4.12},
\begin{equation}\label{eq4.24}
\lim_{j\to\infty} \iint\limits_{E_{1/4}(0,0)\cap D_j}
u^{\frac{n+3}{n+2}}_j \, d\xi\,d\tau=0.
\end{equation}
Starting with \eqref{eq4.24} and using \eqref{eq4.23} a finite number of times 
we find that
for each $p>1$ there exists $\vp>0$ such that the sequence $u_j$ is
bounded in $L^p(E_\vp(0,0)\cap D_j)$ and thus the same is true for the
sequences $v_j$, $Hu_j$, and $f_j$ by \eqref{eq4.20}, \eqref{eq4.19},
and \eqref{eq4.18}.

Thus by \eqref{eq4.10}, there exists $\vp>0$ such that
\begin{align*}
\limsup_{j\to\infty} \frac{u_j(0,0)}{\rho_j(0)} &\le \limsup_{j\to\infty} \iint\limits_{E_1(0,0)\cap D_j} \frac1{\sqrt{-\zeta}} \left(\frac{\rho_j(\eta)}{\sqrt{-\zeta}} \wedge 1\right) \widehat G (|-\eta|,-\zeta) f_j(\eta,\zeta) \, d\eta \, d\zeta\\
&\le \limsup_{j\to\infty} \left(~\iint\limits_{E_\vp(0,0)\cap D_j} \frac1{\sqrt{-\zeta}} \widehat G(|-\eta|,-\zeta) f_j(\eta,\zeta) \, d\eta\, d\zeta\right.\\ 
&\quad \left. + \iint\limits_{(E_1(0,0)-E_\vp(0,0))\cap D_j} \frac1{-\zeta} \widehat G(|-\eta|, -\zeta) f_j(\eta,\zeta) \rho_j(\eta)\, d\eta\, d\zeta\right) < \infty
\end{align*}
where we have estimated the first integral using \eqref{eq4.11} and H\"older's inequality and the second integral using \eqref{eq4.9}. Also by \eqref{eq4.10},
\begin{align*}
\limsup_{j\to\infty} u_j(0,0) &\le \limsup_{j\to\infty} \iint\limits_{E_1(0,0)\cap D_j} \left(\frac{\rho_j(\eta)}{\sqrt{-\zeta}}\wedge 1\right) \widehat G (|-\eta|,-\zeta) f_j(\eta,\zeta) \, d\eta\, d\zeta\\
&\le \limsup_{j\to\infty} \left(\iint\limits_{E_\vp(0,0)\cap D_j} 
\widehat G(|-\eta|,-\zeta) f_j(\eta,\zeta) \, d\eta \, d\zeta\right.\\
&\quad \left. + \iint\limits_{(E_1(0,0) -E_\vp(0,0))\cap D_j}
\frac1{\sqrt{-\zeta}} \widehat G(|-\eta|, -\zeta) f_j(\eta,\zeta)
\rho_j(\eta)\, d\eta \, d\zeta \right)< \infty.
\end{align*}
Hence
\[
 \limsup_{j\to\infty} \frac{u_j(0,0)}{\rho_j(0)\wedge 1} <\infty
\]
which contradicts \eqref{eq4.16} and completes the proof of Theorem~\ref{thm4.1}.
\end{proof}

\end{document}